\documentclass[review,3p,square,sorted]{elsarticle}

\usepackage{amssymb}
\usepackage{amsmath}
\usepackage{amsthm}
\usepackage{stmaryrd}
\usepackage{bbm}
\usepackage{enumitem}

\geometry{top=1in,bottom=1in,left=1in,right=1in}

\usepackage{fancyhdr}
\usepackage{hyperref}
\hypersetup{%
  pdftitle={BSDEs},%
  pdfsubject={BSDEs},%
  pdfauthor={ShengJun FAN, Ying Hu},%
  pdfkeywords={BSDEs},%
  pdfstartview=FitH,%
  CJKbookmarks=true,%
  bookmarksnumbered=true,%
  bookmarksopen=true,%
  colorlinks=true, linkcolor=blue, urlcolor=blue, citecolor=blue, %
}

\usepackage{cleveref}
\crefname{thm}{Theorem}{Theorems}
\crefname{pro}{Proposition}{Propositions}
\crefname{lem}{Lemma}{Lemmas}
\crefname{rmk}{Remark}{Remarks}
\crefname{cor}{Corollary}{Corollaries}
\crefname{dfn}{Definition}{Definitions}
\crefname{ex}{Example}{Examples}
\crefname{section}{Section}{Sections}
\crefname{subsection}{Subsection}{Subsections}

\newcommand{\eps}{\varepsilon}

\newcommand{\To}{\rightarrow}
\newcommand{\as}{{\rm d}\mathbb{P}\times{\rm d} t-a.e.}

\newcommand{\ps}{\mathbb{P}-a.s.}
\newcommand{\dif}{{\rm d}}

\newcommand{\tim}{\times}
\newcommand{\essinf}{\mathop{\operatorname{ess\,inf}}}

\newcommand{\F}{\mathcal{F}}
\newcommand{\E}{\mathbb{E}}

\newcommand{\acal}{\mathcal{A}}

\newcommand{\qcal}{\mathcal{Q}}

\newcommand{\T}{[0,T]}

\newcommand{\p}{{\mathbb P}}
\newcommand{\Q}{{\mathbb Q}}
\newcommand{\R}{{\mathbb R}}
\newcommand{\RE}{\forall}

\newcommand {\Dis}{\displaystyle}

\newtheorem{thm}{Theorem}[section]

\newtheorem{pro}[thm]{Proposition}
\newtheorem{rmk}[thm]{Remark}

\newtheorem{ex}[thm]{Example}

\journal{arXiv}
\begin{document}
\begin{frontmatter}

\title{{Unbounded Dynamic Concave Utilities via BSDEs}\tnoteref{found}}
\tnotetext[found]{This work is supported by National Natural Science Foundation of China (Nos. 12171471, 12031009 and 11631004), by Key Laboratory of Mathematics for Nonlinear Sciences (Fudan University), Ministry of Education, Handan Road 220, Shanghai 200433, China; by Lebesgue Center of Mathematics ``Investissements d'avenir" program-ANR-11-LABX-0020-01, by CAESARS-ANR-15-CE05-0024 and by MFG-ANR-16-CE40-0015-01.
\vspace{0.2cm}}


\author[Fan]{Shengjun Fan} \ead{shengjunfan@cumt.edu.cn}
\author[Hu]{Ying Hu} \ead{ying.hu@univ-rennes1.fr}
\author[Tang]{Shanjian Tang} \ead{sjtang@fudan.edu.cn} \vspace{-0.5cm}

\affiliation[Fan]{organization={School of Mathematics, China University of Mining and Technology},
            city={Xuzhou 221116},
            country={China}}

\affiliation[Hu]{organization={Univ. Rennes, CNRS, IRMAR-UMR6625},
            city={F-35000, Rennes},
            country={France}}

\affiliation[Tang]{organization={Department of Finance and Control Sciences, School of Mathematical Sciences, FudanUniversity},
            city={Shanghai 200433},
            country={China}}


\begin{abstract}
The dynamic concave utility (or the dynamic convex risk measure) of an unbounded  endowment is studied and represented as {  the value process in} the unique solution of a backward stochastic differential equation (BSDE) with an unbounded terminal value, with the help of our recent existence and uniqueness results on unbounded solutions of scalar BSDEs whose generators have a linear, super-linear, sub-quadratic or quadratic growth. Moreover, the infimum in the dynamic concave utility is proved to be { attainable}. The Fenchel-Legendre transform (dual representation) of convex functions, the de la Vall\'{e}e-Poussin theorem, and Young's and Gronwall's inequalities constitute the main ingredients of the dual representation.\vspace{0.2cm}
\end{abstract}

\begin{keyword}
Dynamic concave utilities\sep  Dynamic convex risk measure\sep Dual representation\sep \\
\hspace*{1.8cm} Backward stochastic differential equation\sep Unbounded terminal value.
\vspace{0.2cm}

\MSC[2010] 60H10, 60G44, 91B30\vspace{0.2cm}
\end{keyword}

\end{frontmatter}
\vspace{-0.4cm}

\section{Introduction}
\label{sec:1-Introduction}
\setcounter{equation}{0}

Utility or risk measure is defined via axioms to characterize the preference of an economic agent, or the risk of a random endowment (a contingent claim), via phenomenological properties of the preference in economics or the risk  in risk management. The reader is referred to \cite{DuffieEpstein1992Econometrica,ElKarouiPengQuenez1997MF, HuImkeller2005AAP,DelbaenPengRosazza2010FS,DelbaenHuBao2011PTRF,Delbaen2012Osaka,
BeissnerLinRiedel2020MF} for utilities, \cite{Artzner1999MF,Delbaen2000LN,Delbaen2002AFS} for coherent risk measures, \cite{FrittelliRosazza2002JBF,FollmerSchied2002FS,FollmerSchied2002AFS,
Cheridito-Delbaen-Kupper2004SPA,Cheridito-Delbaen-Kupper2005FS,SmithBickel2022OR} for convex risk measures, \cite{MaoWang2020SIAMFM,JiaXiaZhao2020ArXiv} for monetary risk measures, and \cite{Castagnoli2022OR,Laeven2023arXiv,TianWang2023arXiv} for star-shaped risk measures. The dynamic version of this theme, that is the dynamic utility or the dynamic risk measure, as a time-parameterized family of  operators defined on spaces of random variables, is a popular notion in finance mathematics, and has received an extensive attention for example in \cite{Peng2004LectureNotes,FrittelliRosazza2004book,DetlefsenScandolo2005FS,
Rosazza2006IME,FollmerPenner2006SD,KloppelSchweizer2007MF,Bion-Nadal2008FS,
Jiang2008AAP,Bion-Nadal2009SPA,Drapeau2016AIHPPS,
Laeven2023arXiv,TianWang2023arXiv}. A dynamic concave utility, or equivalently a dynamic convex risk measure,
constitutes the objective of the paper, with an emphasis on the unboundedness of  the family of operators' domains as  spaces of random variables.\vspace{0.2cm}

Fix a nonnegative real number $T>0$ and an integer $d\geq 1$. Assume that $(B_t)_{t\in\T}$ is a standard $d$-dimensional Brownian motion defined on some
complete probability space $(\Omega, \F, \mathbb{P})$, $(\F_t)_{t\geq 0}$ is its natural and augmented filtration, and $\F_T=\F$. The equality and inequality between random elements are understood to hold in the $\ps$ sense. For each $t\in\T$, $L^\infty(\F_t)$ is the set of $\F_t$-measurable scalar bounded random variables, and $\acal(\F_t)$ is a general linear space of $\F_t$-measurable scalar random variables containing $L^\infty(\F_t)$. Suppose that $\acal(\F_s)\subset \acal(\F_t)$ for each $0\leq s\leq t\leq T$. By a dynamic concave utility on $\acal(\F_T)$, we mean a family of time-parameterized operators $\{U_t(\cdot), t\in\T\}$ { mapping} from $\acal(\F_T)$ to $\acal(\F_t)$ such that the following properties are satisfied for each $t\in\T$:\vspace{0.2cm}

(i) {\bf Monotonicity}: $U_t(\xi)\geq U_t(\eta)$ for each $\xi,\eta\in \acal(\F_T)$ such that $\xi\geq \eta$;

(ii) {\bf Translation invariance}: $U_t(\xi+\eta)=U_t(\xi)+\eta$ for each $\xi\in \acal(\F_T)$ and $\eta\in \acal(\F_t)$;

(iii) {\bf Concavity}: $U_t(\theta\xi+(1-\theta)\eta)\geq \theta U_t(\xi)+(1-\theta)U_t(\eta)$ for all $\xi,\eta\in \acal(\F_T)$ and $\theta\in (0,1)$;

(iv) {\bf Time consistency}: $U_s(\xi)=U_s(U_t(\xi))$ for each $\xi\in \acal(\F_T)$ and $s\in [0,t]$.\vspace{0.2cm}

\noindent It is noteworthy that if $\{U_t(\cdot), t\in\T\}$ is a dynamic concave utility on $\acal(\F_T)$ and $\rho_t(X):=-U_t(-X)$ for each $t\in\T$ and $\xi\in \acal(\F_T)$, then $\{\rho_t(\cdot), t\in\T\}$ defines a dynamic convex risk measure on $\acal(\F_T)$. As a result, all assertions obtained in this paper on dynamic concave utilities can be translated into the versions on dynamic convex risk measures.\vspace{0.2cm}

It is well known that Backward Stochastic Differential Equation (BSDEs) introduced by \citet{PardouxPeng1990SCL} offer a significant framework to study time consistent dynamic utilities, risk measures and nonlinear expectations. See for example \cite{ElKarouiPengQuenez1997MF,CoquetHuMeminPeng2002PTRF, Peng2004LectureNotes,HuImkeller2005AAP,Tang2006CRA,Rosazza2006IME,
Jiang2008AAP,HuMaPengYao2008SPA,DelbaenPengRosazza2010FS,DelbaenHuBao2011PTRF,
Drapeau2016AIHPPS,JiShiWangZhou2019IME,TianWang2023arXiv,FanHuTang2023arXiv,
Laeven2023arXiv} for details. To understand the essential nature of a dynamic concave utility, we are particularly interested in its mathematical representation as an adapted solution of a BSDE. Relevant work has been done in this direction, most of it on bounded endowments. In what follows, let us recall some of them.\vspace{0.2cm}

Given a lower semi-continuous (LSC) convex function $f:\R^d\To \R_+\cup \{+\infty\}$ such that $f(0)=0$ and $\liminf_{|q|\To +\infty} f(q)/|q|^2>0$. For any bounded endowment $\xi\in L^\infty(\F_T)$, we define\vspace{0.1cm}
\begin{equation}\label{eq:1.1}
U_t(\xi):=\essinf\limits_{q\in \qcal_1}\E_{\Q^q}\left[\left. \xi+\int_t^T f(q_s){\rm d}s\right|\F_t\right],\quad t\in \T,\vspace{0.1cm}
\end{equation}
where
\begin{equation}\label{eq:1.2}
\begin{array}{ll}
\qcal_1:=\bigg\{&\Dis (q_t)_{t\in[0,T]}~\text{is an } (\F_t)\text{-progressively measurable~} \R^d \text{-valued process}:\\
&\Dis \int_{0}^{T}|q_s|^2\dif s<+\infty~~\ps,\ \ L_t^q:=\exp\Big(\int_{0}^{t}q_s\cdot \dif B_s-\frac{1}{2}\int_{0}^{t}|q_s|^2\dif s\Big)\vspace{0.2cm}\\
&\Dis \text{is a martingale on $\T$, and} \ \frac{\dif \Q^q}{\dif \mathbb{P}}:=L_T^q \in L^1(\F_T)\ \ \bigg\},\vspace{0.2cm}
\end{array}
\end{equation}
$x\cdot y$ denotes the Euclidean inner product of two vectors $x,y\in \R^d$, and $\E_{\Q^q}[\cdot|\F_t]$ is the expectation operator conditioned on the $\sigma$-field $\F_t$ under the probability equivalent measure $\Q^q$.
The time-parameterized operator $\{U_t(\cdot), ~t\in \T\}$ defined via \eqref{eq:1.1} is easily verified to be a dynamic concave utility on $L^\infty(\F_T)$. Inversely, as observed in \citet{DelbaenPengRosazza2010FS}, any dynamic concave utility on $L^\infty(\F_T)$ has a representation like  \eqref{eq:1.1} under some extra mild assumptions. Furthermore, Theorems 2.1-2.2 in \citet{DelbaenHuBao2011PTRF} show that there is an $(\F_t)$-progressively measurable square-integrable $\R^d$-valued process $(Z_t)_{t\in\T}$ such that $(U_t(\xi), Z_t)_{t\in\T}$ is the unique bounded solution of the following scalar BSDE:
\begin{equation}\label{eq:1.3}
Y_t=\xi-\int_t^T g(Z_s){\rm d}s+\int_t^T Z_s \cdot {\rm d}B_s,\quad t\in \T,
\end{equation}
where the function
\begin{equation}\label{eq:1.4}
g(z):=\sup_{q\in \R^d}(z\cdot q-f(q))\geq 0,\quad  z\in \R^d\vspace{0.1cm}
\end{equation}
is conjugate to $f$ and {  convex}, and has the properties that  $g(0)=0$ and $\limsup_{|z|\To +\infty} g(z)/|z|^2<+\infty$. In other words, for a nonnegative, LSC, convex and super-quadratically growing function $f$ with $f(0)=0$, the dynamic concave utility on $L^\infty(\F_T)$ defined via \eqref{eq:1.1} has a dual representation as the value process in the unique bounded solution to BSDE \eqref{eq:1.3}.\vspace{0.2cm}

Dual representation of an unbounded dynamic convex risk measure via BSDE was investigated in \citet{Drapeau2016AIHPPS}. More specifically, by using language of dynamic concave utilities, Theorem 4.5 in \cite{Drapeau2016AIHPPS} verifies the following assertion. Given an LSC convex function $f:\R^d\To \R_+\cup \{+\infty\}$ with $f(0)=0$ and  $\acal_0(\F_T):=\{\xi\in\F_T|\sup_{t\in\T}|\E[\xi|\F_t]|\in L^1\}$. For any endowment $\xi\in \acal_0(\F_T)$, we define
\begin{equation}\label{eq:1.5}
    U_t(\xi):=\essinf\limits_{q\in \qcal_2}\E_{\Q^q}\left[\left. \xi+\int_t^T f(q_s){\rm d}s\right|\F_t\right],\ \ t\in \T,\vspace{0.1cm}
\end{equation}
where
\begin{equation}\label{eq:1.6}
\begin{array}{ll}
\qcal_2:=\bigg\{&\Dis (q_t)_{t\in[0,T]}~\text{is an } (\F_t)\text{-progressively measurable~} \R^d \text{-valued process}:\\
&\Dis \int_{0}^{T}|q_s|^2\dif s<+\infty~~\ps,\ \ L_t^q:=\exp\Big(\int_{0}^{t}q_s\cdot \dif B_s-\frac{1}{2}\int_{0}^{t}|q_s|^2\dif s\Big)\vspace{0.2cm}\\
&\Dis \text{is a martingale on $\T$, and} \ \frac{\dif \Q^q}{\dif \mathbb{P}}:=L_T^q \in L^\infty(\F_T)\ \ \bigg\}.\vspace{0.2cm}
\end{array}
\end{equation}
Then, $U_t(\xi)$ is well-defined and there is an $(\F_t)$-progressively measurable $\R^d$-valued process $(Z_t)_{t\in\T}$ such that $(Y_t:=-U_t(\xi), Z_t)_{t\in\T}$ is the unique minimal supersolution of a convex BSDE with the terminal value $-\xi$ and the driver $g$ defined in \eqref{eq:1.4} investigated in \citet{Drapeau2013AP}, i.e., $(Y_t)_{t\in\T}$ is $(\F_t)$-adapted and c\`{a}dl\`{a}g, $(Z_t)_{t\in\T}$ is $(\F_t)$-progressively measurable such that $\int_0^T |Z_t|^2{\rm d}t<+\infty$ and $(\int_0^t Z_s\cdot {\rm d}B_s)_{t\in\T}$ is a supermartingale, and $(Y_t, Z_t)_{t\in\T}$ fulfils
\begin{equation}\label{eq:1.7}
\Dis Y_s\geq Y_t+\int_s^t g(Z_u){\rm d}u-\int_s^t Z_u\cdot {\rm d}B_u, \ \ \RE\ 0\leq s\leq t\leq T,\ \ {\rm and}\ \ Y_T\geq -\xi;
\end{equation}
moreover, if $(\tilde Y_t, \tilde Z_t)_{t\in\T}$ also satisfies the above conditions, then $Y_t\leq \tilde Y_t$ for each $t\in\T$. It should be noted that the unique minimal supersolution $(Y_t, Z_t)_{t\in\T}$ of BSDE \eqref{eq:1.7} is far away from the adapted solution $(\bar Y_t, \bar Z_t)_{t\in\T}$ of the following BSDE satisfying the above-mentioned supermartingale condition:
\begin{equation}\label{eq:1.8}
\bar Y_t=-\xi+\int_t^T g(\bar Z_s){\rm d}s-\int_t^T \bar Z_s\cdot {\rm d}B_s, \ \ t\in\T.
\end{equation}
Furthermore, Theorem 4.6 of \citet{Drapeau2016AIHPPS} indicates that if the infimum in $U_0(\xi)$ of \eqref{eq:1.5} is achieved for some $q\in \qcal_2$, then the unique minimal supersolution $(-U_t(\xi), Z_t)_{t\in\T}$ of BSDE \eqref{eq:1.7} is actually an adapted solution of BSDE \eqref{eq:1.8}, which means that $(U_t(\xi), Z_t)_{t\in\T}$ is an adapted solution of BSDE \eqref{eq:1.3}, and that for each $t\in\T$, $q_t\in \partial g(Z_t)$ and $U_t(\xi)$ is also achieved for $q\in \qcal_2$. In other words, for a nonnegative, LSC and convex function $f$ with $f(0)=0$, the dynamic concave utility on $\acal_0(\F_T)$ defined via \eqref{eq:1.5} has a dual representation as the value process in the unique adapted solution to BSDE \eqref{eq:1.3} provided that the infimum of $U_0(\xi)$ in \eqref{eq:1.5} is attainable in $\qcal_2$ defined via \eqref{eq:1.6} for each $\xi\in \acal_0(\F_T)$. However, the infimum in $U_0(\xi)$ of \eqref{eq:1.5} is not necessarily attainable in $\qcal_2$, and it is generally hard to verify attainability of the infimum due to the fact that the $L_T^q$ in $\qcal_2$ is essentially bounded.\vspace{0.2cm}

On the other hand, we are given a nonnegative, LSC, convex and super-quadratically growing function $f$ with $f(0)=0$. For any unbounded endowment $\xi$ having finite exponential moments of arbitrary order, \citet{BriandHu2006PTRF,BriandHu2008PTRF} confirm that BSDE \eqref{eq:1.3} still has a unique adapted solution $(Y_t,Z_t)_{t\in \T}$ such that $\sup_{t\in \T} |Y_t|$ has finite exponential moments of arbitrary order, and the associated comparison theorem and stability theorem on the solutions of preceding BSDEs hold.
Then, the following questions are naturally asked: for an endowment $\xi$ admitting finite exponential moments of arbitrary order, is there a set $\qcal(\xi,f)$ of density processes $(q_t)_{t\in\T}$ of probability equivalent measures $\Q^q$ (like $\qcal_1$ and $\qcal_2$ in \eqref{eq:1.2} and \eqref{eq:1.6}) associated with $(\xi,f)$ such that the following operator
\begin{equation}\label{eq:1.9}
U_t(\xi):=\essinf\limits_{q\in \qcal(\xi,f)}\E_{\Q^q}\left[\left. \xi+\int_t^T f(s,q_s){\rm d}s\right|\F_t\right],\ \ t\in \T,
\end{equation}
is well-defined and represented as the value process in the unique adapted solution of BSDE \eqref{eq:1.3}, and such that the infimum in \eqref{eq:1.9} is attainable for some $q\in \qcal(\xi,f)$? If the answer is yes for some $\qcal(\xi,f)$, then the operator $\{U_t(\cdot),\ t\in\T\}$ defined via \eqref{eq:1.9} constitutes a dynamic concave utility on a space of possibly unbounded endowments admitting finite exponential moments of arbitrary order. Intuitively, the set $\qcal(\xi,f)$ is neither too large like $\qcal_1$ nor too small like $\qcal_2$ in order to ensure both well-posedness and attainability of \eqref{eq:1.9}. Furthermore, another question is also naturally asked: are there larger linear spaces of possibly unbounded endowments $\xi$ connected to different features of functions $f$ and sets $\qcal(\xi,f)$ where the preceding duality representation and attainability of the infimum remain true? In this paper, we shall give some affirmative answers to these two questions.\vspace{0.2cm}

More specifically, we first present some updated existence and uniqueness results on unbounded solutions of scalar BSDEs whose generators have a linear, super-linear, sub-quadratic or quadratic growth established in \cite{BriandHu2008PTRF,HuTang2018ECP,BuckdahnHuTang2018ECP,FanHu2019ECP, FanHu2021SPA,OKimPak2021CRM,FanHuTang2023SPA,FanHuTang2023arXiv}, where the possibly unbounded terminal value $\xi$ belongs to various linear space $\acal(\F_T)$ containing $L^\infty(\F_T)$. See \cref{pro:2.2} in Section 2 for details. Then, with the help of these results, under four different scenarios of the endowment and core function $(\xi,f)$ that are respectively linked to the linear, super-linear, sub-quadratic or quadratic growth on the generator $g$ of BSDE~\eqref{eq:1.3}, we prove that for suitable sets $\qcal(\xi,f)$, the dynamic concave utility of a possibly unbounded endowment $\xi$ defined via \eqref{eq:1.9} is well-defined, it can be represented as the value process in the unique adapted solution of BSDE~\eqref{eq:1.3}, and the infimum in \eqref{eq:1.9} is attainable. In particular, we consider the general case of core function $f$, which might be time-varying and random. See \cref{thm:3.1} in Section 3 for details. \cref{thm:3.1} strengthens the corresponding result in \citet{DelbaenPengRosazza2010FS} and \citet{DelbaenHuBao2011PTRF} to the case of an unbounded endowment. Some results obtained in \citet{Drapeau2016AIHPPS} are also improved to some extent. More importantly, \cref{thm:3.1} makes it possible to compute the unbounded dynamic concave utilities via solving the solution of BSDEs by numerical algorithms such as Monte Carlo method. See \cref{rmk:3.4} in Section 3 for more details. In addition, in \cref{ex:3.3} of Section 3 we present several examples illustrating the applicability of \cref{thm:3.1}.\vspace{0.2cm}

The proof of \cref{thm:3.1} is quite involved. The whole idea is distinguished with those used in \cite{Bion-Nadal2008FS,DetlefsenScandolo2005FS, DelbaenPengRosazza2010FS,DelbaenHuBao2011PTRF,Drapeau2016AIHPPS} where some truncation and approximation arguments, the BMO-martingale theory, Doob-Meyer decomposition of a supermartingale and some supermartingale property are usually employed. For our case, with the help of Young's and Gronwall's inequalities, we first propose and prove two useful technical propositions (see \cref{pro:4.1,pro:4.3} in Section 4), where several novel test functions have to be constructed so as to apply It\^o's formula. Then, by virtue of these two propositions along with \cref{pro:2.2}, using standard duality arguments such as the Fenchel-Legendre transform of a convex functional and the Fenchel-Moreau theorem as well as the de la Vall\'{e}e-Poussin theorem we verify the desired dual representation in \cref{thm:3.1}. \vspace{0.2cm}

The rest of this paper is organized as follows. In Section 2, we introduce notations, spaces and preliminary existence and uniqueness results { (\cref{pro:2.2})} on adapted solutions to scalar BSDEs whose generators have a linear, super-linear, sub-quadratic or quadratic growth. In Section 3, we state our dual representation results (\cref{thm:3.1}) of dynamic concave utilities defined on diverse linear spaces of unbounded endowments via the solutions of  BSDEs with unbounded terminal values and attainability of the infimum, and give some remarks and examples { (\cref{ex:3.3} and \cref{rmk:3.4})} to illustrate our theoretical results. Finally in Section 4, we first establish two auxiliary propositions (\cref{pro:4.1,pro:4.3}), and then prove the above representation results.\vspace{0.2cm}

\section{Preliminaries}
\label{sec:2-Preliminaries}
\setcounter{equation}{0}

First of all, we introduce some notations and spaces used in this paper. Let $\R_+:=[0,+\infty)$, and for $a,b\in \R$, let $a\wedge b:=\min\{a,b\}$, $a^+:=\max\{a,0\}$ and $a^-:=-\min\{a,0\}$. Let ${\bf 1}_{A}(x)$ represent the indicator function of set $A$. For a convex function $f:\R^{d}\rightarrow\R$, denote by $\partial f(z_0)$ its subdifferential at $z_0$, which is the non-empty convex compact set of elements $u\in\R^d$ such that for each $z\in\R^d$,
$f(z)-f(z_0)\geq u\cdot (z-z_0)$. Denote by $\Sigma_T$ the set of all $(\F_t)$-stopping times $\tau$ taking values in $\T$. For an $(\F_t)$-adapted scalar process $(X_t)_{t\in\T}$, we say that it belongs to class (D) if the family of random variables $\{X_\tau: \tau\in \Sigma_T\}$ is uniformly integrable. Throughout the whole paper, we will be given two $(\F_t)$-progressively measurable nonnegative scalar processes $(h_t)_{t\in \T}$ and $(\bar h_t)_{t\in \T}$,  and four positive constants $\gamma,\lambda,c>0$ and $\alpha\in (1,2)$. Let $\alpha^*>2$ represent the conjugate of $\alpha$, i.e., $1/\alpha+1/\alpha^*=1$.\vspace{0.2cm}

For $p,\mu>0$ and $t\in \T$, we denote by $L^p(\F_t)$ the set of $\F_t$-measurable scalar random variables $\eta$ such that $\E[|\eta|^p]<+\infty$, and define the following three spaces of $\F_t$-measurable scalar random variables:
$$
L(\ln L)^p (\F_t):=\left\{\eta\in \F_t\left| \E\left[|\eta|(\ln(1+|\eta|))^p\right]<+\infty\right.\right\},
$$
$$
L\exp[\mu(\ln L)^p](\F_t):=\left\{\eta\in \F_t\left| \E\left[|\eta|\exp{\left(\mu (\ln(1+|\eta|))^p\right)}\right]<+\infty\right.\right\}
$$
and
$$
\exp(\mu L^p)(\F_t):=\left\{\eta\in \F_t\left| \E\left[\exp{\left(\mu |\eta|^p\right)}\right]<+\infty\right.\right\}.
$$
It is not hard to verify that these spaces become smaller when the parameter $\mu$ or $p$ increases, and for each $\mu,\bar\mu,r>0$ and $0<p<1<q$,
$$
 L^\infty(\F_t)\subset \exp(\mu L^r)(\F_t)\subset L\exp[\bar\mu(\ln L)^q](\F_t) \subset L\exp[\mu \ln L](\F_t)=L^{1+\mu}(\F_t)\vspace{-0.1cm}
$$
and
$$
L^q(\F_t)\subset L\exp[\mu(\ln L)^p](\F_t)\subset L(\ln L)^r(\F_t)\subset L^1(\F_t).
$$
For each $p,\mu>0$ and $0<\bar p\leq 1<\tilde p$, it is clear that the following spaces
$$
 L\exp[\mu(\ln L)^{\bar p}](\F_t),\ \ \bigcap\limits_{\bar\mu>0}L\exp[\bar\mu(\ln L)^{\bar p}](\F_t), \  \bigcup\limits_{\bar\mu>0} L\exp[\bar\mu(\ln L)^{\tilde p}](\F_t)\ \ {\rm and}\ \ \bigcap\limits_{\bar\mu>0}\exp(\bar\mu L^p)(\F_t)
$$
are all linear spaces containing $L^\infty(\F_t)$. Note that under the conditions without causing confusion, the $\sigma$-algebra $(\F_T)$ is usually omitted in these notations on the spaces of random variables. We would like to introduce the following practical examples of unbounded endowments in a financial market.

\begin{ex}\label{ex:2.1}
Let $(X_t)_{t\in\T}$ be the unique adapted solution of the following SDE:
$$
\dif X_t=b(t,X_t)\dif t+\sigma(t,X_t)\cdot \dif B_t,\ \ t\in\T;\ \ X_0=x_0,
$$
where $x_0\in \R$ is a given constant and both $b(t,x):\T\times\R\To \R$ and $\sigma(t,x):\T\times\R\To \R^d$ are measurable functions satisfying that for each $x_1,x_2\in { \R}$ and $t\in\T$, we have
$$
|b(t,x_1)-b(t,x_2)|+|\sigma(t,x_1)-\sigma(t,x_2)|\leq c|x_1-x_2|\ \ {\rm and}\ \ |b(t,0)|+|\sigma(t,0)|\leq c.
$$
We consider an endowment $\eta$ which equals to $X_T$. By classical theory of SDEs we know that
$$
\eta:=X_T\in L^2\subset \bigcap\limits_{\mu>0}L\exp[\mu(\ln L)^{1\over 2}].
$$
Moreover, if it is also supposed that $|\sigma(t,x)|\leq c$ for each $(t,x)\in \T\times\R$, then the argument in \citet[page 563]{BriandHu2008PTRF} implies that there exists two positive constants $c_1$ and $c_2$ depending only on $(c,T)$ such that
$$
\E\left[\exp\left(c_1\sup_{t\in\T}|X_t|^2\right)\right]\leq c_2\exp(c_2|x_0|^2),\vspace{0.1cm}
$$
which indicates that for each $\lambda>0$,
$$\eta:=X_T\in \bigcap\limits_{\mu>0}\exp(\mu L)\subset \bigcap_{\mu>0}\exp(\mu L^{\frac{2}{\alpha^*}})\subset \bigcup_{\mu>0}L\exp[\mu(\ln L)^{1+2\lambda}]\subset L^2.$$
Finally, if we let $d=1$, $b(t,x):=bx$ and $\sigma(t,x):=\sigma x$ for two positive constants $b$ and $\sigma$, then we have
$$
X_t=x_0\exp\left(bt-\frac{1}{2}\sigma^2+\sigma B_t\right),\ \ t\in\T.
$$
Thus, for a typical European call option $\eta$ defined by $(X_T-K)^+$ with the previously agreed strike price $K>0$, we can conclude that for each $\lambda\in (0, 1/2]$,
$$
\eta:=(X_T-K)^+\in \bigcup_{\mu>0}L\exp[\mu(\ln L)^{1+2\lambda}].\vspace{0.2cm}
$$
\end{ex}

Finally, let us recall some updated results on scalar BSDEs. Consider the following scalar BSDE:
\begin{equation}\label{eq:2.1}
  Y_t=\xi-\int_t^T g(s,Z_s){\rm d}s+\int_t^T Z_s\cdot {\rm d}B_s, \ \ t\in\T,
\end{equation}
where $\xi$ is called the terminal value, which is an $\F_T$-measurable scalar random variable, the random function $g(\omega, t, z):\Omega\times\T\times\R^d \to \R$, which is $(\F_t)$-progressively measurable for each $z\in \R^d$, is called the generator of \eqref{eq:2.1}, and the pair of $(\F_t)$-progressively measurable processes $(Y_t,Z_t)_{t\in\T}$ taking values in $\R\times\R^d$ is called an adapted solution of \eqref{eq:2.1} if $\ps$, $t\mapsto Y_t$ is continuous, $t\mapsto |g(t,Z_t)|+|Z_t|^2$ is integrable, and \eqref{eq:2.1} holds. Furthermore, we introduce the following assumptions on the generator $g$.

\begin{itemize}
\item [(H0)] $\as$, $g(\omega,t,\cdot)$ is convex.\vspace{-0.2cm}
\item [(H1)] $g$ has a quadratic growth in $z$, i.e., $\as$, $\RE z\in \R^d$,
$|g(\omega,t,z)|\leq \bar h_t(\omega)+\frac{\gamma}{2}|z|^2$.\vspace{-0.2cm}
\item [(H2)] $g$ has a sub-quadratic growth in $z$, i.e., $\as$, $\RE z\in \R^d$, $|g(\omega,t,z)|\leq \bar h_t(\omega)+\gamma |z|^\alpha$.\vspace{-0.2cm}
\item [(H3)] $g$ has a super-linear growth in $z$, i.e., $\as$, $\RE z\in \R^d$, $|g(\omega,t,z)|\leq \bar h_t(\omega)+\gamma |z|\left(\ln(e+|z|)\right)^{\lambda}$.\vspace{-0.2cm}
\item [(H4)] $g$ has a linear growth in $z$, i.e., $\as$, $\RE z\in \R^d$, $|g(\omega,t,z)|\leq \bar h_t(\omega)+\gamma |z|$.
\end{itemize}

We remark that (H1)-(H4) are alternative sets of assumptions on the generator $g$.

\begin{rmk}\label{rmk:2.1}
Both assumptions (H0) and (H4) yield that $\as$, for each $\theta\in (0,1)$ and each $z_1,z_2\in \R^d$, we have\vspace{-0.2cm}
$$
\begin{array}{l}
g(\omega,t,z_1)=\Dis g\left(\omega,t,\theta z_2+(1-\theta)\frac{z_1-\theta z_2}{1-\theta}\right)\leq \Dis \theta g(\omega,t,z_2)+(1-\theta)\left(\bar h_t(\omega)+\gamma \frac{|z_1-\theta z_2|}{1-\theta}\right)\\
\hspace{1.5cm} =\Dis \theta g(\omega,t,z_2)+(1-\theta)\bar h_t(\omega)+\gamma |z_1-\theta z_2|,
\end{array}
$$
and then, by letting $\theta\To 1$ in the last inequality, we have $|g(\omega,t,z_1)-g(\omega,t,z_2)|\leq \gamma |z_1-z_2|$, i.e., the generator $g$ is Lipschitz with respect to $z$.
\end{rmk}

\begin{pro}\label{pro:2.2}
Denote $\bar\xi:=|\xi|+\int_0^T \bar h_t{\rm d}t$ and $Y^*:=\sup_{t\in \T}|Y_t|$. We have the following assertions.
\begin{itemize}
\item [(i)] Assume that $\bar\xi\in \cap_{\mu>0}\exp(\mu L)$ and the generator $g$ satisfies assumptions (H0) and (H1). Then, BSDE \eqref{eq:2.1} admits a unique adapted solution $(Y_t,Z_t)_{t\in\T}$ such that $Y^*\in \cap_{\mu>0}\exp(\mu L)$.

\item [(ii)] Assume that $\bar\xi\in \cap_{\mu>0}\exp(\mu L^{\frac{2}{\alpha^*}})$ and the generator $g$ satisfies assumptions (H0) and (H2). Then, BSDE \eqref{eq:2.1} admits a unique adapted solution $(Y_t,Z_t)_{t\in\T}$ such that $Y^*\in \cap_{\mu>0}\exp(\mu L^{\frac{2}{\alpha^*}})$.

\item [(iii)] Assume that $\bar\xi\in \cap_{\mu>0}L\exp[\mu(\ln L)^{({\frac{1}{2}+\lambda})\vee (2\lambda)}]$ and the generator $g$ satisfies assumptions (H0) and (H3). Then, BSDE \eqref{eq:2.1} admits a unique adapted solution $(Y_t,Z_t)_{t\in\T}$ such that for each $\mu>0$,
    $$
   {\rm the\ process}\ (|Y_t|\exp(\mu(\ln(1+|Y_t|))^{({\frac{1}{2}+\lambda})\vee (2\lambda)}))_{t\in\T}\ {\rm belongs\ to\ class\ (D)}.
   $$
   Moreover, if there exists a constant $\bar\mu>0$ such that $\bar\xi\in L\exp[\bar\mu(\ln L)^{1+2\lambda}]$, then for any positive constant $\tilde\mu<\bar \mu$, we have $Y^*\in L\exp[\tilde\mu(\ln L)^{1+2\lambda}]$.

\item [(iv)] Assume that $\bar\xi\in L\exp[\mu(\ln L)^{1\over 2}]$ for some $\mu>\mu_0:=\gamma\sqrt{2T}$ and the generator $g$ satisfies (H0) and (H4). Then, BSDE \eqref{eq:2.1} admits a unique adapted solution $(Y_t,Z_t)_{t\in\T}$ such that for some $\tilde\mu>0$, $(|Y_t|\exp(\tilde\mu\sqrt{\ln(1+|Y_t|)}))_{t\in\T}$ belongs to class (D). Moreover, if $\bar\xi\in \bigcap_{\bar\mu>0}L\exp[\bar\mu(\ln L)^{1\over 2}]$, then for any $\bar\mu>0$, the process $(|Y_t|\exp(\bar\mu\sqrt{\ln(1+|Y_t|)}))_{t\in\T}$ belongs to class (D).
\end{itemize}
\end{pro}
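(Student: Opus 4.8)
The plan is to treat the four assertions as four instances of a single three-step scheme---approximation, uniform a priori estimates in the matching moment space, and uniqueness by convexity---carried out under the growth regime that pairs with each hypothesis. The pairing is dictated by which Orlicz-type space from Section~2 is preserved under the action of the generator: $\exp(\mu L)$ for the quadratic case (H1), $\exp(\mu L^{2/\alpha^*})$ for the sub-quadratic case (H2), the $L\exp[\mu(\ln L)^p]$ scale for the super-linear case (H3), and the borderline $L\exp[\mu(\ln L)^{1/2}]$ scale for the linear case (H4). Since existence and uniqueness in each regime are already the content of \citet{BriandHu2008PTRF, FanHu2021SPA, OKimPak2021CRM, FanHuTang2023SPA, FanHuTang2023arXiv}, the proof amounts to checking that the data $(\xi,g)$---and in particular the hypothesis on $\bar\xi$, which bundles the terminal value with the $\bar h$-part of the growth---meet the hypotheses of the corresponding result; I describe the common argument below. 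Note first that, by \cref{rmk:2.1}, (H0) and (H4) together force $g$ to be globally Lipschitz in $z$, so assertion (iv) may be handled by Lipschitz BSDE theory with an unbounded terminal value, rather than by the local-Lipschitz machinery needed for (i)--(iii).

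For existence I would truncate the generator into a sequence $g_n$ that is globally Lipschitz yet retains convexity and the relevant growth control (for instance an inf-convolution $g_n(z)=\inf_w(g(w)+n|z-w|)$, which is convex, $n$-Lipschitz and increases to $g$), solve the resulting BSDEs $(Y^n,Z^n)$ by the classical theory for integrable terminal data, and then produce uniform estimates. These come from applying It\^o's formula to a test function $\phi$ chosen so that its convexity absorbs the growth of $g$ through Young's inequality: $\phi(y)=\exp(\mu|y|)$ in the quadratic case, $\phi(y)=\exp(\mu|y|^{2/\alpha^*})$ in the sub-quadratic case, and $\phi(y)=|y|\exp(\mu(\ln(1+|y|))^p)$ in the log-type cases, with the conjugacy relation $1/\alpha+1/\alpha^*=1$ and the exponent $p=(\tfrac12+\lambda)\vee(2\lambda)$ emerging precisely from balancing $g$'s growth against $\phi''$. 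The bounds are uniform in $n$ in the relevant norm, so the sequence is monotone by comparison and converges; the uniform moment control passes to the limit and identifies $(Y_t,Z_t)_{t\in\T}$ as an adapted solution lying in the asserted space.

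Uniqueness, and the comparison principle underpinning the limit above, I would obtain by the convexity (or $\theta$-) technique. Given two solutions $(Y^1,Z^1)$ and $(Y^2,Z^2)$ with the same data, set $U^\theta:=(Y^1-\theta Y^2)/(1-\theta)$ and $V^\theta:=(Z^1-\theta Z^2)/(1-\theta)$ for $\theta\in(0,1)$. Convexity (H0) gives $g(Z^1)\leq\theta g(Z^2)+(1-\theta)g(V^\theta)$, so $U^\theta$ is a subsolution of the same BSDE with terminal value $\xi$; the comparison theorem then yields $U^\theta_t\leq Y^1_t$, that is $Y^1_t\leq Y^2_t$, and letting $\theta\To1$ and exchanging the roles of the two solutions gives $Y^1=Y^2$. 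The moment estimates from the previous step are exactly what justifies the comparison theorem here, since they guarantee integrability of the Girsanov densities produced when the locally Lipschitz generator is linearised along the two solutions.

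The main obstacle is the sharpness of the a priori estimates at the threshold, not the algebra of the scheme. In the linear case (iv), linearising as $g(s,Z_s)=g(s,0)+\zeta_s\cdot Z_s$ with $|\zeta_s|\leq\gamma$ yields $Y_t=\E^{\Q}[\xi-\int_t^T g(s,0)\dif s\mid\F_t]$ under the equivalent measure $\Q$ whose density $L_T^\zeta$ is the stochastic exponential of $\int\zeta\cdot\dif B$; bounding the moments of $Y$ then reduces to estimating $\E[\bar\xi\,L_T^\zeta]$ against a Gaussian tail, and since that tail lives exactly on the $\sqrt{\ln L}$ scale, the moment hypothesis $L\exp[\mu(\ln L)^{1/2}]$ and the critical constant $\mu_0=\gamma\sqrt{2T}$ are both natural and unavoidable. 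Checking that $\mu>\mu_0$ suffices, and that it does so uniformly along the approximation, is the delicate point; the analogous balancing fixes the exponent $(\tfrac12+\lambda)\vee(2\lambda)$ in (iii) and the powers $2/\alpha^*$ and $1$ in the exponential scales of (i)--(ii). Constructing the test functions $\phi$ that realise these balances, and controlling the It\^o remainder terms uniformly in the truncation level, is where the work concentrates.
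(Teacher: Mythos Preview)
Your proposal is correct, and you identify at the outset that the result is a compilation of known theorems, so that the proof reduces to matching the hypotheses to the appropriate references; this is exactly the paper's approach. Its proof is four lines of citations---Corollary~6 of \citet{BriandHu2008PTRF} for (i), Theorem~3.1 of \citet{FanHu2021SPA} for (ii), Theorem~2.4 of \citet{FanHuTang2023SPA} together with Doob's maximal inequality for (iii), and \cref{rmk:2.1} combined with Theorem~3.1 of \citet{HuTang2018ECP} and Theorem~2.5 of \citet{BuckdahnHuTang2018ECP} for (iv)---with no further argument. The extended sketch you give of the approximation, test-function a~priori estimates, and $\theta$-technique is not part of the paper's proof at all; it is an accurate summary of what happens \emph{inside} the cited references and a useful explanation of why the particular Orlicz exponents arise, but the paper defers all of that to the literature. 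For assertion (iv) you cite \citet{OKimPak2021CRM} rather than \citet{HuTang2018ECP} and \citet{BuckdahnHuTang2018ECP}; the paper uses the latter pair as the primary references and lists the former only among supplementary sources.
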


\begin{proof}
Assertion (i) is a direct consequence of Corollary 6 in \citet{BriandHu2008PTRF}, and assertion (ii) follows immediately from Theorem 3.1 in \citet{FanHu2021SPA}. Furthermore, according to Theorem 2.4 of \citet{FanHuTang2023SPA} together with Doob's maximal inequality for martingales, we can easily derive assertion (iii). Finally, in view of \cref{rmk:2.1}, assertion (iv) can be obtained by applying Theorem 3.1 of \citet{HuTang2018ECP} and Theorem 2.5 of \citet{BuckdahnHuTang2018ECP}. The readers are also refereed to \citet{FanHu2019ECP}, \citet{OKimPak2021CRM} and \citet{FanHuTang2023arXiv} for more details. The proof is complete.
\end{proof}

\section{Statement of the main result}
\label{sec:3-Statement}
\setcounter{equation}{0}

In the rest of this paper, we always assume that the random core function
$$f(\omega,t,q):\Omega\times\T\times\R^d\To \R\cup \{+\infty\}$$
is $(\F_t)$-progressively measurable for each $q\in \R^d$, and that $f$ satisfies the following assumption:
\begin{itemize}
\item [(A0)] $\as$, $f(\omega,t,\cdot)$ is LSC and convex, and there exists an $(\F_t)$-progressively measurable $\R^d$-valued process $(\bar q_t)_{t\in \T}$ and a constant $k\geq 0$ such that $\as$,\vspace{-0.1cm}
$$
|\bar q_t(\omega)|\leq k\ \ {\rm and}\ \ f\left(\omega,t, \bar q_t(\omega)\right)\leq h_t(\omega).\vspace{-0.2cm}
$$
Recalling that $(h_t)_{t\in \T}$ is a given $(\F_t)$-progressively measurable nonnegative scalar process.
\end{itemize}
In particular, if $\as$, $f(\omega, t,\cdot)$ is a convex function taking values in $\R$ and $f(\omega,t,0)\equiv 0$, then assumption (A0) is naturally satisfied.\vspace{0.2cm}

For each endowment $\xi\in \acal(\F_T)$ and random core function $f$, we define the following process space:
\begin{equation}\label{eq:3.1}
\begin{array}{ll}
\qcal(\xi,f):=\bigg\{&\Dis (q_t)_{t\in[0,T]}~\text{is an } (\F_t)\text{-progressively measurable~} \R^d \text{-valued process}:\\
&\Dis \int_{0}^{T}|q_s|^2\dif s<+\infty~~ \ps, ~\E_{\Q^q}\Big[|\xi|+\int_{0}^{T}(h_s+|f(s,q_s)|)\dif s\Big]<+\infty,\\
&\Dis \text{with} ~L_t^q:=\exp\Big(\int_{0}^{t}q_s\cdot \dif B_s-\frac{1}{2}\int_{0}^{t}|q_s|^2\dif s\Big), \ t\in[0,T],\\
&\Dis \text{being a uniformly integrable martingale, and} \ \frac{\dif \Q^q}{\dif \mathbb{P}}:=L_T^q \ \ \bigg\}.
\end{array}
\end{equation}
In the present paper, we  aim to study that under what conditions on the endowment $\xi$ and the random core function $f$, the space $\qcal(\xi,f)$ is nonempty, the following time-parameterized operator
\begin{equation}\label{eq:3.2}
U_t(\xi):=\essinf\limits_{q\in \qcal(\xi,f)}\E_{\Q^q}\left[\left. \xi+\int_t^T f(s,q_s){\rm d}s\right|\F_t\right],\ \ t\in \T,
\end{equation}
is well-defined and admits a dual representation via the adapted solution of the following BSDE
\begin{equation}\label{eq:3.3}
  Y_t=\xi-\int_t^T g(s,Z_s){\rm d}s+\int_t^T Z_s\cdot {\rm d}B_s, \ \ t\in\T,
\end{equation}
where the generator $g$ of BSDE \eqref{eq:3.3} is the Fenchel-Legendre transform of $f$, i.e.,
\begin{equation}\label{eq:3.4}
g(\omega,t,z):=\sup_{q\in \R^d}(z\cdot q-f(\omega,t,q)),\ \ \RE (\omega,t,z)\in \Omega\tim \T\tim \R^d,
\end{equation}
and the infimum in \eqref{eq:3.2} is attainable. Hence, the operator $\{U_t(\cdot),\ t\in\T\}$ defined via \eqref{eq:3.2} constitutes a dynamic concave utility defined on some linear spaces bigger than $L^\infty(\F_T)$. It is clear from \eqref{eq:3.4} and (A0) that $\as$, $g(\omega,t,\cdot)$ is a convex function defined on $\R^d$, and
\begin{equation}\label{eq:3.5}
g(\omega,t,z)\geq z\cdot \bar q_t(\omega)-f(\omega,t,\bar q_t(\omega))\geq -k|z|-h_t(\omega),\ \ \RE z\in \R^d.
\end{equation}

Furthermore, let us introduce the following assumptions on the core function $f$.

\begin{itemize}
\item [(A1)] $\as$, $\RE q\in \R^d$, we have $f(\omega,t,q)\geq \frac{1}{2\gamma} |q|^2-h_t(\omega)$.

\item [(A2)] $\as$, $\RE q\in \R^d$, we have $f(\omega,t,q)\geq {\gamma^{-\frac{1}{\alpha-1}}} |q|^{\alpha^*}-h_t(\omega)$.

\item [(A3)] $\as$, $\RE q\in \R^d$, we have $f(\omega,t,q)\geq c\exp\left(2{\gamma^{-\frac{1}{\lambda}}}
|q|^{\frac{1}{\lambda}}\right)-h_t(\omega)$.

\item [(A4)] $\as$, $\RE q\in \R^d$, we have $f(\omega,t,q)\geq -h_t(\omega)$ and $f(\omega,t,q)\equiv +\infty$ in the case of $|q|>\gamma$.
\end{itemize}

We remark that (A1)-(A4) are alternative sets of assumptions on $f$, and anyone of them can ensure that the dual function $g$ of $f$ satisfies that $\as$, $g(\omega,t,z)<+\infty$ for each $z\in \R^d$.\vspace{0.2cm}

The main result of the paper is stated as follows.

\begin{thm}\label{thm:3.1}
Denote $Y^*:=\sup_{t\in \T}|Y_t|$. We have the following assertions.
\begin{itemize}
\item [(i)] Assume that the core function $f$ satisfies assumptions  (A0) and (A1) with $\int_0^T h_t{\rm d}t\in \cap_{\mu>0}\exp(\mu L)$. Then, the generator $g$ defined in \eqref{eq:3.4} satisfies assumptions (H0) and (H1) with $\bar h_t:=h_t+k^2/2\gamma$. Moreover, for each $\xi\in \cap_{\mu>0}\exp(\mu L)$, BSDE \eqref{eq:3.3} admits a unique adapted solution $(Y_t,Z_t)_{t\in\T}$ such that $Y^*\in \cap_{\mu>0}\exp(\mu L)$, $(\bar q_t)_{t\in\T}\in \qcal(\xi,f)$, the process $(U_t(\xi))_{t\in\T}$ defined via \eqref{eq:3.2} is well-defined and can be represented as the preceding value process $(Y_t)_{t\in\T}$, and the infimum in \eqref{eq:3.2} is achieved for $q_s\in \partial g(s,Z_s)$, $s\in \T$. Consequently, the operator $\{U_t(\cdot),\ t\in\T\}$ constitutes a dynamic concave utility on $\cap_{\mu>0} \exp(\mu L)$.

\item [(ii)] Assertion (i) remains true when (A1) and (H1) are respectively replaced by (A2) and (H2) { with $\bar h_t:=h_t+{\gamma^{-\frac{1}{\alpha-1}}} k^{\alpha^*}$,} and $\cap_{\mu>0}\exp(\mu L)$ by $\cap_{\mu>0}\exp(\mu L^{\frac{2}{\alpha^*}})$.

\item [(iii)] Assertion (i) remains true when (A1) and (H1) are respectively replaced by (A3) and (H3) { with $\bar h_t:=h_t+\exp(2{\gamma^{-\frac{1}{\lambda}}}|k|^{\frac{1}{\lambda}})
    +C_{c,\gamma,\lambda}$, where $C_{c,\gamma,\lambda}>0$ is a constant depending only on $(c,\gamma,\lambda)$,} and $\cap_{\mu>0}\exp(\mu L)$ by $\cup_{\mu>0}L\exp[\mu(\ln L)^{1+2\lambda}]$.

\item [(iv)] Assume that the constant $k$ appearing in (A0) is less than $\gamma$. Then, Assertion (i) remains true when (A1) and (H1) are respectively replaced by (A4) and (H4) { with $\bar h_t:=h_t$,} $\cap_{\mu>0}\exp(\mu L)$ by $\cap_{\bar\mu>0}L\exp[\bar\mu(\ln L)^{1\over 2}]$, and $Y^*\in \cap_{\mu>0}\exp(\mu L)$ by the expression that for each $\bar\mu>0$,
    $$
    {\rm the\ process}\ (|Y_t|\exp(\bar\mu\sqrt{\ln(1+|Y_t|)}))_{t\in\T}\ {\rm belongs\ to\ class\ (D)}.
    $$
\end{itemize}
\end{thm}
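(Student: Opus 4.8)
The plan is to prove assertion (i) in full and to indicate that (ii)--(iv) follow \emph{mutatis mutandis}, with the growth exponents and weighted spaces adjusted so as to invoke the corresponding part of \cref{pro:2.2}. The argument has four stages: a convex-analytic computation of the growth of $g$, an appeal to \cref{pro:2.2} for existence/uniqueness together with a verification that $\qcal(\xi,f)$ is nonempty, the lower bound $Y_t\leq U_t(\xi)$ via Girsanov and the Fenchel--Young inequality, and finally attainability at a subgradient selection, which supplies the reverse bound.

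\textbf{Step 1: growth of the conjugate $g$.} Being a supremum of affine functions, $g(\omega,t,\cdot)$ is convex, so (H0) holds. For (H1), assumption (A1) gives $f(\omega,t,q)\geq\frac{1}{2\gamma}|q|^2-h_t(\omega)$, whence by completing the square (the maximiser being $q=\gamma z$)
\[
g(\omega,t,z)=\sup_{q\in\R^d}\big(z\cdot q-f(\omega,t,q)\big)\leq h_t(\omega)+\sup_{q\in\R^d}\Big(z\cdot q-\tfrac{1}{2\gamma}|q|^2\Big)=h_t(\omega)+\tfrac{\gamma}{2}|z|^2.
\]
For the matching lower bound I combine \eqref{eq:3.5}, namely $g(\omega,t,z)\geq -k|z|-h_t(\omega)$, with Young's inequality $k|z|\leq k^2/(2\gamma)+\frac{\gamma}{2}|z|^2$, obtaining $|g(\omega,t,z)|\leq \bar h_t+\frac{\gamma}{2}|z|^2$ with $\bar h_t=h_t+k^2/(2\gamma)$. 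The cases (A2)--(A4) are the identical computation: the power conjugate of $|q|^{\alpha^*}$ yields the $|z|^\alpha$ growth of (H2), the Legendre transform of the exponential in (A3) yields the super-linear $|z|(\ln(e+|z|))^\lambda$ growth of (H3), and the constraint $|q|\leq\gamma$ in (A4) (together with $k<\gamma$) yields the linear growth of (H4) with $\bar h_t=h_t$, the required lower bounds coming in each case from \eqref{eq:3.5} and Young's inequality in the appropriate scale.

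\textbf{Step 2: solvability and nonemptiness of $\qcal(\xi,f)$.} With (H0)--(H1) established and $\xi,\int_0^T h_t\,\dif t\in\cap_{\mu>0}\exp(\mu L)$, so that $\bar\xi=|\xi|+\int_0^T\bar h_t\,\dif t\in\cap_{\mu>0}\exp(\mu L)$, assertion (i) of \cref{pro:2.2} furnishes the unique adapted solution $(Y,Z)$ with $Y^*\in\cap_{\mu>0}\exp(\mu L)$; continuity and adaptedness of $Y$ give $U_t(\xi)=Y_t\in\acal(\F_t)$ once the representation is proved. To see $\qcal(\xi,f)\neq\emptyset$ I check $\bar q\in\qcal(\xi,f)$: since $|\bar q_t|\leq k$ is bounded, $L^{\bar q}$ is a uniformly integrable martingale, and the required $\Q^{\bar q}$-integrability of $|\xi|+\int_0^T(h_s+|f(s,\bar q_s)|)\,\dif s$ follows from $f(s,\bar q_s)\leq h_s$ together with a change-of-measure moment estimate.

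\textbf{Step 3: the lower bound (the main obstacle).} Fix $q\in\qcal(\xi,f)$ and apply Girsanov's theorem: under $\Q^q$ the process $\tilde B_t:=B_t-\int_0^t q_s\,\dif s$ is Brownian, so \eqref{eq:3.3} rewrites as
\[
Y_t=\xi-\int_t^T\big(g(s,Z_s)-Z_s\cdot q_s\big)\,\dif s+\int_t^T Z_s\cdot\dif\tilde B_s.
\]
The Fenchel--Young inequality $g(s,Z_s)-Z_s\cdot q_s\geq -f(s,q_s)$ yields $Y_t\leq\xi+\int_t^T f(s,q_s)\,\dif s+\int_t^T Z_s\cdot\dif\tilde B_s$, and taking $\E_{\Q^q}[\cdot|\F_t]$ would give $Y_t\leq\E_{\Q^q}[\xi+\int_t^T f(s,q_s)\,\dif s\,|\,\F_t]$, hence $Y_t\leq U_t(\xi)$ after the essential infimum. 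The hard part is precisely that the stochastic integral must be a genuine (uniformly integrable) $\Q^q$-martingale, not merely a local one: $Z$ lives only in a weighted space and the density $L^q_T$ can destroy integrability. This is exactly what \cref{pro:4.1,pro:4.3} are built to handle, by applying It\^o's formula to specially constructed test functions and closing the estimates with Young's and Gronwall's inequalities, thereby furnishing the $\Q^q$-moment bounds that upgrade the local martingale to a true one and legitimise the conditional expectation.

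\textbf{Step 4: attainability and the reverse bound.} By a measurable selection I fix $q^*_s\in\partial g(s,Z_s)$. Since $g$ and $f$ are conjugate, the Fenchel--Young \emph{equality} $g(s,Z_s)-Z_s\cdot q^*_s=-f(s,q^*_s)$ turns the inequality of Step 3 into an identity, with $\tilde B$ now the $\Q^{q^*}$-Brownian motion. It remains to verify $q^*\in\qcal(\xi,f)$: because $g$ has quadratic growth its subgradients grow at most linearly, so $|q^*_s|\leq C(\bar h_s+|Z_s|+1)$ (and in case (iv) simply $|q^*_s|\leq\gamma$, as $g$ is Lipschitz by \cref{rmk:2.1}), which reduces the required $\Q^{q^*}$-moment and uniform-integrability conditions once more to the estimates of \cref{pro:4.1,pro:4.3}. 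Taking $\E_{\Q^{q^*}}[\cdot|\F_t]$ then gives $Y_t=\E_{\Q^{q^*}}[\xi+\int_t^T f(s,q^*_s)\,\dif s\,|\,\F_t]\geq U_t(\xi)$; combined with Step 3 this yields $U_t(\xi)=Y_t$ with the infimum attained at $q^*$. Monotonicity, translation invariance and concavity of $\{U_t(\cdot)\}$ are then immediate from the dual representation \eqref{eq:3.2} as an infimum of affine functionals of $\xi$, while time consistency follows from the flow property of BSDE \eqref{eq:3.3}, completing the proof that $\{U_t(\cdot),\,t\in\T\}$ is a dynamic concave utility.
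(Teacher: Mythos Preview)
Your overall architecture matches the paper's: convex-analytic bound on $g$, \cref{pro:2.2} for well-posedness, Girsanov plus Fenchel--Young for the lower bound, and the subgradient selection for attainability. Steps~1--2 are fine, and Step~3 is correct in spirit (the paper localises with $\tau_n^t$, obtains \eqref{eq:4.35}, and passes to the limit by showing $\E_{\Q^q}[\sup_t|Y_t|]<\infty$; this uses \eqref{eq:4.12} of \cref{pro:4.3}, which is fed by the finiteness of $\E_{\Q^q}[\int_0^T|q_t|^2\dif t]$ coming from (A1) and the definition of $\qcal(\xi,f)$, together with \eqref{eq:4.6}).

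The genuine gap is in Step~4. Your subgradient bound $|q^*_s|\leq C(\bar h_s+|Z_s|+1)$ gives only $\int_0^T|q^*_s|^2\,\dif s<\infty$ a.s., which makes $L^{q^*}$ a positive local martingale, hence a supermartingale, but \emph{not} automatically a uniformly integrable martingale---and without $\E[L^{q^*}_T]=1$ you cannot define $\Q^{q^*}$, so $q^*\notin\qcal(\xi,f)$ and the reverse inequality is unjustified. Saying this ``reduces to the estimates of \cref{pro:4.1,pro:4.3}'' skips the key mechanism. The paper closes this via a bootstrap that uses the BSDE itself together with de la Vall\'ee-Poussin: localise with $\sigma_n$ and write $Y_0=\E_{\Q^{\tilde q}_n}[Y_{\sigma_n}+\int_0^{\sigma_n}f(s,\tilde q_s)\,\dif s]$ (this is where Fenchel--Young is an equality); bound $f(s,\tilde q_s)\geq \frac{1}{2\gamma}|\tilde q_s|^2-h_s$ from (A1); use \eqref{eq:4.6} to control $\E[(|Y_{\sigma_n}|+\int h)L^{\tilde q}_{\sigma_n}]$ by $\E[\exp(2\gamma(\cdot))]$ plus $\E[L^{\tilde q}_{\sigma_n}\ln(1+L^{\tilde q}_{\sigma_n})]$; and use \eqref{eq:4.12} to bound the latter by $\frac12\E[\int_0^{\sigma_n}L^{\tilde q}_s|\tilde q_s|^2\,\dif s]+\ln 2$. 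Rearranging yields a uniform-in-$n$ bound $\sup_n\E[L^{\tilde q}_{\sigma_n}\ln(1+L^{\tilde q}_{\sigma_n})]<\infty$, and de la Vall\'ee-Poussin then gives uniform integrability of $(L^{\tilde q}_{\sigma_n})_n$, hence $\E[L^{\tilde q}_T]=1$. Only after this does one check $\E_{\Q^{\tilde q}}[|\xi|+\int(h+|f|)]<\infty$ and pass to the limit in \eqref{eq:4.45}. This circular-looking estimate---feeding the $|\tilde q|^2$ term from (A1) back against the entropy of $L^{\tilde q}$---is the heart of the attainability proof and is not captured by your sketch.
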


\begin{ex}\label{ex:3.3}
We present the following several examples illustrating the applicability of \cref{thm:3.1}.
\begin{itemize}
\item [(i)] Let $(\bar q_t)_{t\in \T}$ be an $(\F_t)$-progressively measurable $\R^d$-valued process such that $\as,\ |\bar q_t|\leq \gamma$ and let the core function $f$ be defined as follows: $\RE (\omega,t,q)\in \Omega\tim\T\tim\R^d$, $f(\omega, t,q):=h_t(\omega)$ when $q=\bar q_t(\omega)$; Otherwise, $f(\omega, t,q):=+\infty$. It is clear that $f$ satisfies (A0) and (A4), and if
\begin{equation}\label{eq:3.6}
\E_{\Q^{\bar q}}\left[{ |\xi|}+\int_0^T h_s\dif s\right]<+\infty,\vspace{-0.1cm}
\end{equation}
then
\begin{equation}\label{eq:3.7}
U_t(\xi):=\essinf\limits_{q\in \qcal(\xi,f)}\E_{\Q^q}\left[\left. \xi+\int_t^T f(s,q_s){\rm d}s\right|\F_t\right]=\E_{\Q^{\bar q}}\left[\left.\xi+\int_t^T h_s\dif s\right|\F_t\right],\ \ t\in\T.
\end{equation}
On the other hand, it is easy to verify that the convex conjugate function of $f$ is the following:
$$
\RE (\omega,t,z)\in \Omega\tim\T\tim\R^d,\ \  g(\omega,t,z):=\sup_{q\in \R^d}(z\cdot q-f(\omega,t,q))=\bar q_t(\omega)\cdot z-h_t(\omega).
$$
According to Girsanov's theorem, we know that when \eqref{eq:3.6} is satisfied, the process $(U_t(\xi))_{t\in\T}$ in \eqref{eq:3.7} is just the value process $Y$ in the unique adapted solution $(Y_t,Z_t)_{t\in\T}$ of BSDE \eqref{eq:3.3} with this generator $g$ { such that $(Y_t)_{t\in\T}$ belongs to class (D) under $\Q^{\bar q}$.} We remark that (iv) of \cref{thm:3.1} indicates that if $|\xi|+\int_0^T h_t{\rm d}t \in \cap_{\mu>0}L\exp[\mu(\ln L)^{1\over 2}]$, then the process $(U_t(\xi))_{t\in\T}$ in \eqref{eq:3.7} is just the value process $Y$ in the unique adapted solution $(Y_t,Z_t)_{t\in\T}$ of BSDE \eqref{eq:3.3} with this generator $g$ such that for each $\bar\mu>0$,
    $$
    {\rm the\ process}\ (|Y_t|\exp(\bar\mu\sqrt{\ln(1+|Y_t|)}))_{t\in\T}\ {\rm belongs\ to\ class\ (D)}.
    $$

\item [(ii)] Let the core function $f$ be defined as follows: $\RE (\omega,t,q)\in \Omega\tim\T\tim\R^d$, $f(\omega, t,q):=0$ when $|q|\leq \gamma$; Otherwise, $f(\omega, t,q):=+\infty$. It is clear that $f$ satisfies (A0) and (A4) with $h_t\equiv 0$, and for each $\xi\in L^2$, we have
\begin{equation}\label{eq:3.8}
U_t(\xi):=\essinf\limits_{q\in \qcal(\xi,f)}\E_{\Q^q}\left[\left. \xi+\int_t^T f(s,q_s){\rm d}s\right|\F_t\right]=\essinf_{q_\cdot\in\R^d:|q_\cdot|\leq \gamma}\E_{\Q^{\bar q}}\left[\xi |\F_t\right],\ \ t\in\T.
\end{equation}
On the other hand, it is easy to verify that the convex conjugate function of $f$ is the following:
$$
\RE (\omega,t,z)\in \Omega\tim\T\tim\R^d,\ \ g(\omega,t,z):=\sup_{q\in \R^d}(z\cdot q-f(\omega,t,q))=\gamma |z|.
$$
It follows from Lemma 3 of \citet{ChenPeng2000SPL} that for each $\xi\in L^2$, the process $(U_t(\xi))_{t\in\T}$ in \eqref{eq:3.8} is just the value process $(Y_t)_{t\in\T}$ in the unique adapted solution $(Y_t,Z_t)_{t\in\T}$ of BSDE \eqref{eq:3.3} with this generator $g$ such that $\E[\sup_{t\in\T}|Y_t|^2+\int_0^T |Z_t|^2 {\rm d}t]<+\infty$. We remark that (iv) of \cref{thm:3.1} indicates that for each $\xi\in \cap_{\mu>0}L\exp[\mu(\ln L)^{1\over 2}]$, the process $(U_t(\xi))_{t\in\T}$ in \eqref{eq:3.7} is just the value process $Y$ in the unique adapted solution $(Y_t,Z_t)_{t\in\T}$ of BSDE \eqref{eq:3.3} with this generator $g$ such that for each $\bar\mu>0$,
${\rm the\ process}\ (|Y_t|\exp(\bar\mu\sqrt{\ln(1+|Y_t|)}))_{t\in\T}\ {\rm belongs\ to\ class\ (D)}.$

\item [(iii)] Let the core function $f$ be defined as follows: $\RE (\omega,t,q)\in \Omega\tim\T\tim\R^d$, $f(\omega, t,q):=\frac{1}{2\gamma}|q|^2$. It is clear that $f$ satisfies assumptions (A0) and (A1) with $h_t\equiv 0$. On the other hand, it is easy to verify that the convex conjugate function of $f$ is the following: $\RE (\omega,t,z)\in \Omega\tim\T\tim\R^d$,
$$
g(\omega,t,z):=\sup_{q\in \R^d}(z\cdot q-f(\omega,t,q))=\frac{\gamma}{2} |z|^2.
$$
It follows from \citet{BriandHu2006PTRF} that for each $\xi\in \exp(\gamma L)$, BSDE \eqref{eq:3.3} with the above generator $g$ admits a unique adapted solution $(Y_t,Z_t)_{t\in\T}$ such that $\{\exp(\gamma |Y_t|)\}_{t\in\T}$ belongs to class (D), and the process $Y$ can be explicitly expressed as the well-known dynamic entropic risk measure of $\xi$ (see \citet{ElKarouiPengQuenez1997MF}). More specifically, we have $Y_t=\frac{1}{\gamma}\ln \left(\E\left[\exp(\gamma \xi) |\F_t\right]\right),\ t\in\T$. Thus, according to (i) of \cref{thm:3.1}, we can conclude that for each $\xi\in \cap_{\mu>0}\exp(\mu L)$, $$
U_t(\xi):=\essinf\limits_{q\in \qcal(\xi,f)}\E_{\Q^q}\left[\left. \xi+\frac{1}{2\gamma}\int_t^T |q_s|^2{\rm d}s\right|\F_t\right]=\frac{1}{\gamma}\ln \E\left[\exp(\gamma \xi) |\F_t\right],\ \ t\in\T.
$$

\item [(iv)] Let the core function $f$ be defined as follows: $\RE (\omega,t,q)\in \Omega\tim\T\tim\R^d$, $f(\omega, t,q):=\frac{1}{2}|q|^2$ when $|q|\leq \gamma$; Otherwise, $f(\omega, t,q):=+\infty$. It is clear that $f$ satisfies (A0) and (A4) with $h_t\equiv 0$. It is not hard to verify that the convex conjugate function of $f$ is the following: $\RE (\omega,t,z)\in \Omega\tim\T\tim\R^d$,
$$
g(\omega,t,z):=\sup_{q\in \R^d}(z\cdot q-f(\omega,t,q))=\left\{
\begin{array}{l}
\Dis \frac{1}{2}|z|^2,\ \ \ \ \ \ \ \ \ ~|z|\leq \gamma;\\
\Dis \gamma |z|-\frac{1}{2}\gamma^2,\ \ \ |z|>\gamma,\\
\end{array}
\right.
$$
and that this generator $g$ satisfies assumptions (H0) and (H4). Thus, if $\xi\in \cap_{\mu>0}L\exp[\mu(\ln L)^{1\over 2}]$, the conclusions in (iv) of \cref{thm:3.1} can be applied.

\item [(v)] Let the core function $f$ be defined as follows: $\RE (\omega,t,q)\in \Omega\tim\T\tim\R^d$, $f(\omega,t,q):=e^{|q|}+h_t(\omega)$. It is clear that $f$ satisfies assumptions (A0) and (A3) with $(c,\gamma,\lambda)=(1,2,1)$. It is not hard to verify that the convex conjugate function of $f$ is the following: $\RE (\omega,t,z)\in \Omega\tim\T\tim\R^d$,
$$
g(\omega,t,z):=\sup_{q\in \R^d}(z\cdot q-f(\omega,t,q))=|z|(\ln|z|-1)-h_t(\omega),
$$
and that $g$ satisfies assumptions (H0) and (H3). Thus, if $\xi+\int_0^T h_t\dif t\in \cup_{\mu>0}L\exp[\mu(\ln L)^3]$, then the conclusions in (iii) of \cref{thm:3.1} can be applied.

\item [(vi)] Let the core function $f$ be defined as follows: $\RE (\omega,t,q)\in \Omega\tim\T\tim\R^d$, $f(\omega,t,q):=\frac{1}{4}|q|^4+h_t(\omega)$. It is clear that $f$ satisfies assumptions (A0) and (A2) with $(\alpha,\alpha^*,\gamma)=(\frac{4}{3},4,\sqrt[3]{4})$. It is not hard to verify that the convex conjugate function  of $f$ is the following: $\RE (\omega,t,z)\in \Omega\tim\T\tim\R^d$,
$$
g(\omega,t,z):=\sup_{q\in \R^d}(z\cdot q-f(\omega,t,q))=\frac{3}{4}|z|^{\frac{4}{3}}-h_t(\omega),
$$
and that this generator $g$ satisfies assumptions (H0) and (H2). Thus, if $\xi+\int_0^T h_t\dif t\in \cap_{\mu>0}\exp(\mu L^{\frac{1}{2}})$, then the conclusions in (ii) of \cref{thm:3.1} can be applied.

\item [(vii)] Let $d=1$ and the core function $f$ be defined as follows: $\RE (\omega,t,q)\in \Omega\tim\T\tim\R$,
$$
f(\omega, t,q):=\left\{
\begin{array}{l}
+\infty,\ \ \ \ q<1;\\
q-1,\ \ \ 1\leq q\leq 2;\\
\frac{1}{4}q^2,\ \ \ \ \ \ \ q>2.
\end{array}
\right.
$$
It is clear that $f$ satisfies assumptions (A0) and (A1) with $\gamma=2$ and $h_t\equiv 0$. It is not hard to verify that the convex conjugate function of $f$ is the following: $\RE (\omega,t,z)\in \Omega\tim\T\tim\R$,
$$
g(\omega,t,z):=\sup_{q\in \R^d}(z\cdot q-f(\omega,t,q))=\left\{
\begin{array}{l}
z,\ \ \ \ z<1;\\
z^2,\ \ \ z\geq 1,
\end{array}
\right.
$$
and that this generator $g$ satisfies assumptions (H0) and (H1). Thus, if $\xi\in \cap_{\mu>0}\exp(\mu L)$, then the conclusions in (i) of \cref{thm:3.1} can be applied.
\end{itemize}
\end{ex}

\begin{rmk}\label{rmk:3.4}
We make the following remarks on \cref{thm:3.1}.
\begin{itemize}
\item [(i)] As stated in the introduction, under an extra condition that the core function $f$ takes values in $\R_+$ and is independent of $(\omega,t)$ with $f(0)=0$, the conclusions in (i) and (iv) of \cref{thm:3.1} have been respectively explored in \citet{DelbaenHuBao2011PTRF} and \citet{DelbaenPengRosazza2010FS} for the special case of $\xi\in L^\infty$. In other words, (i) and (iv) of \cref{thm:3.1} strengthens these two results by lifting the above extra condition on $f$ and considering possibly unbounded endowments $\xi$. On the other hand, to the best of our knowledge, (ii) and (iii) of \cref{thm:3.1} is totally new.

\item [(ii)] As stated in the introduction, under an extra condition that the core function $f$ takes values in $\R_+$ with $f(0)=0$, Theorems 4.5 and 4.6 in \citet{Drapeau2016AIHPPS} indicates that if the infimum of $U_0(\xi)$ in \eqref{eq:1.5} is attainable in $\qcal_2$ defined via \eqref{eq:1.6} for each $\xi\in \acal_0(\F_T)$, then the dynamic concave utility on $\acal_0(\F_T)$ defined via \eqref{eq:1.5} has a dual representation as the value process in the unique adapted solution to BSDE \eqref{eq:1.3}. However, generally speaking, it is hard to verify attainability of the infimum in \eqref{eq:1.5} since the $L_T^q$ in $\qcal_2$ is essentially bounded. In \cref{thm:3.1}, regarding to four different scenarios of $(\xi,f)$,  we use the set $\qcal(\xi,f)$ defined via \eqref{eq:3.1} instead of $\qcal_2$ so that the infimum in \eqref{eq:3.2} is well-defined and attainable.

\item [(iii)] \cref{thm:3.1} illustrates that the dual representation, as the value process in the unique solution of a BSDE, of several class of dynamic concave utilities holds for unbounded endowments and general random core functions, and that the minimizer of the utility can be achieved in suitable set $\qcal(\xi,f)$. This makes it possible to compute the unbounded dynamic concave utilities via solving the unique adapted solution of BSDEs by numerical algorithms such as Monte Carlo method.

\item [(v)] The proof of \cref{thm:3.1} is enlightened by \citet{DelbaenHuBao2011PTRF}, \citet{DelbaenPengRosazza2010FS} and in particular \citet{DelbaenHuRichou2011AIHPPS}, where the Fenchel-Legendre transform of a convex function and the de la Vall\'{e}e-Poussin theorem play important roles. Moreover, by virtue of Young's inequality and Gronwall's inequality we propose and prove two useful technical propositions via which some core difficulties arising in the proof of \cref{thm:3.1} are successfully overcome. See the next section for more details. In addition, from the proof of \cref{thm:3.1}, it can be observed that the boundedness assumption on the process $(\bar q_t)_{t\in\T}$ in (A0) can be appropriately weakened in stating (i)-(iii) of \cref{thm:3.1}.

\item [(v)] In \cref{thm:3.1}, the integrability requirements on $\xi$ and $\int_0^T h_t{\rm d}t$ are independent of two constants $\gamma$ and $c$. This brings convenience to the application of \cref{thm:3.1}. In fact, when the core function $f$ is given, one can pick proper constants $\gamma$ and $c$ so that {  someone} of (A1)-(A4) holds for $f$.

\item [(vi)] The space $L^p\ (p>1)$ is smaller than the space of $\cap_{\mu>0}L\exp[\mu(\ln L)^{\frac{1}{2}}]$ and bigger than the space of $\cup_{\mu>0}L\exp[\mu(\ln L)^{1+2\lambda}]$. It is interesting to find some appropriate growth conditions on the core function $f$, which are weaker than (A4) and stronger than (A3), such that the dual representation in \cref{thm:3.1} holds for the endowments $|\xi|+\int_0^T h_t{\rm d}t\in L^2$ or $|\xi|+\int_0^T h_t{\rm d}t\in L^p\ (p>1)$. The existence of such conditions remains an open question.

\item [(vii)] If the generator $g$ in BSDE \eqref{eq:3.3} also depends on the value process and is decreasing in it, then by \eqref{eq:3.3} we can define a cash-subadditive risk measure as in \citet{ElKarouiRavanelli2009MF}. The dynamic cash-subadditive risk measure was studied in \citet{Drapeau2016AIHPPS}, \citet{RosazzaZullino2023arXiv} and \citet{Laeven2023arXiv}. Unbounded dynamic convex risk measures (concave utilities) with the cash-subadditivity will be one of focuses in our future study.
\end{itemize}
\end{rmk}

\section{Proof of \cref{thm:3.1}}
\label{sec:4-Proof}
\setcounter{equation}{0}

\subsection{Two auxiliary propositions\vspace{0.2cm}}

We first establish two auxiliary technical propositions, which are interesting in their own right.\vspace{0.2cm}

First of all, let us recall the known Young inequality. Let $f:\R_+\rightarrow \R_+$ be a strictly increasing continuous function with $f(0)=0$, and $g$ be the inverse function of $f$. Then, we have
\begin{equation}\label{eq:4.1}
xy\leq \int_0^x f(s){\rm d}s+\int_0^y g(s){\rm d}s\leq xf(x)+yg(y),\ \ \RE x,y\geq 0.
\end{equation}
Based on this elementary inequality, we easily prove the following

\begin{pro}\label{pro:4.1}
Let $\mu,\delta>0$ be two arbitrarily given positive constants. Then, we have
\begin{equation}\label{eq:4.2}
xy\leq x\exp\left(\frac{x^\delta}{\mu^\delta}\right)+\mu y(\ln(1+y))^{\frac{1}{\delta}},\ \ \RE x,y\geq 0
\end{equation}
and
\begin{equation}\label{eq:4.3}
xy\leq x\exp\left(\frac{(\ln(1+x))^\delta}{\mu^\delta}\right)+y\exp\left(\mu (\ln(1+y))^{\frac{1}{\delta}}\right),\ \ \RE x,y\geq 0.\vspace{0.1cm}
\end{equation}
Furthermore, for each $q>1$ and $\eps>0$, there exists a constant $C_{q,\eps,\mu,\delta}>0$ depending only on $(q,\eps,\mu,\delta)$ such that
\begin{equation}\label{eq:4.4}
xy\leq \eps \exp\left(\frac{qx^\delta}{\mu^\delta}\right)+\mu y(\ln(1+y))^{\frac{1}{\delta}}+C_{q,\eps,\mu,\delta},\ \ \RE x,y\geq 0
\end{equation}
and in the case of $\delta>1$,
\begin{equation}\label{eq:4.5}
xy\leq \eps\exp\left(\frac{q(\ln(1+x))^\delta}{\mu^\delta}\right)+y\exp\left(\mu (\ln(1+y))^{\frac{1}{\delta}}\right)+C_{q,\eps,\mu,\delta},\ \ \RE x,y\geq 0.
\end{equation}
In particular, we have
\begin{equation}\label{eq:4.6}
xy\leq \mu\exp\left(\frac{x}{\mu}\right)+\mu y\ln(1+y),\ \ \RE x, y\geq 0,\vspace{0.2cm}
\end{equation}
and for each $q>1$, there exists a constant $\bar C_{\mu,q}>0$ depending only on $(\mu,q)$ such that
\begin{equation}\label{eq:4.7}
y\exp(x) \leq \bar C_{\mu,q}\exp\left(\frac{q x^2}{\mu^2}\right)+y\exp\left(\mu \sqrt{\ln(1+y)}\right), \ \ \RE x,y\geq 0.
\end{equation}
In addition, for $\delta>1$, we have
\begin{equation}\label{eq:4.8}
xy\leq \mu x^{\delta}+\frac{1}{\mu^{\frac{1}{\delta-1}}}y^{\delta^*},\ \ \RE x,y\geq 0,
\end{equation}
where $\delta^*:=\delta/(\delta-1)$ is the conjugate of $\delta$, i.e., $1/\delta+1/\delta^*=1$.
\end{pro}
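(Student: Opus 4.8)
The plan is to derive every inequality from the elementary Young inequality \eqref{eq:4.1} by choosing the strictly increasing continuous function $f$ (with $f(0)=0$) appropriately and computing its inverse $g$ explicitly. For \eqref{eq:4.2} I would take $f(s):=\exp(s^\delta/\mu^\delta)-1$, whose inverse is readily seen to be $g(t)=\mu(\ln(1+t))^{1/\delta}$; feeding these into the right-hand bound $xy\le xf(x)+yg(y)$ of \eqref{eq:4.1} and discarding the harmless $-x$ term gives \eqref{eq:4.2}. For \eqref{eq:4.3} the analogous choice is $f(s):=\exp((\ln(1+s))^\delta/\mu^\delta)-1$, whose inverse is $g(t)=\exp(\mu(\ln(1+t))^{1/\delta})-1$; the same bound $xy\le xf(x)+yg(y)$ yields \eqref{eq:4.3} after dropping two nonpositive terms. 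Both inverses are obtained by simply solving $t=f(s)$ for $s$, and both $f$'s are strictly increasing with $f(0)=0$ for every $\delta,\mu>0$, so \eqref{eq:4.1} applies verbatim.

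For \eqref{eq:4.6} and \eqref{eq:4.8} I would instead use the integral form $xy\le\int_0^x f+\int_0^y g$ of \eqref{eq:4.1}. For \eqref{eq:4.6}, taking $f(s):=\exp(s/\mu)-1$ and $g(t):=\mu\ln(1+t)$, one computes $\int_0^x f=\mu\exp(x/\mu)-\mu-x\le\mu\exp(x/\mu)$ and $\int_0^y g=\mu[(1+y)\ln(1+y)-y]\le\mu y\ln(1+y)$, the last step using $\ln(1+y)\le y$. For \eqref{eq:4.8}, with $\delta>1$ I would take $f(s):=\mu\delta s^{\delta-1}$ so that $\int_0^x f=\mu x^\delta$, and its inverse $g(t)=(t/(\mu\delta))^{1/(\delta-1)}$ gives $\int_0^y g=\frac{1}{\delta^*(\mu\delta)^{1/(\delta-1)}}y^{\delta^*}$; since $\delta^*\delta^{1/(\delta-1)}\ge1$ for $\delta>1$, the coefficient is bounded by $\mu^{-1/(\delta-1)}$, which is exactly \eqref{eq:4.8}.

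The remaining inequalities \eqref{eq:4.4}, \eqref{eq:4.5} and \eqref{eq:4.7} are obtained from \eqref{eq:4.2}, \eqref{eq:4.3}, and \eqref{eq:4.3} with $\delta=2$, respectively, and this absorption step is where the real work lies. For \eqref{eq:4.4} it suffices to show that for $q>1$ and any $\varepsilon>0$ there is a constant $C_{q,\varepsilon,\mu,\delta}$ with $x\exp(x^\delta/\mu^\delta)\le\varepsilon\exp(qx^\delta/\mu^\delta)+C_{q,\varepsilon,\mu,\delta}$; this holds because the ratio $x\exp((1-q)x^\delta/\mu^\delta)\to0$ as $x\to\infty$ (so it drops below $\varepsilon$ past some $x=R$) while $x\exp(x^\delta/\mu^\delta)$ stays bounded on $[0,R]$. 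For \eqref{eq:4.5} the same scheme applies to $x\exp((\ln(1+x))^\delta/\mu^\delta)$, and here the hypothesis $\delta>1$ is essential: it is precisely what makes $\ln x+(1-q)(\ln(1+x))^\delta/\mu^\delta\to-\infty$, guaranteeing the ratio tends to $0$ (for $\delta\le1$ the logarithmic term cannot dominate $\ln x$).

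For \eqref{eq:4.7} I would substitute $x\mapsto e^x$ in \eqref{eq:4.3} with $\delta=2$, giving $ye^x\le e^x\exp((\ln(1+e^x))^2/\mu^2)+y\exp(\mu\sqrt{\ln(1+y)})$, then use $\ln(1+e^x)\le x+\ln2$ to control the first term by $e^x\exp((x+\ln2)^2/\mu^2)$, and finally absorb this into $\bar C_{\mu,q}\exp(qx^2/\mu^2)$ via the elementary fact that the quadratic $(q-1)x^2/\mu^2$ dominates all the lower-order terms in the exponent for $q>1$, which fixes $\bar C_{\mu,q}$. I expect these last absorption arguments, together with the verification that $\delta>1$ is genuinely needed in \eqref{eq:4.5}, to be the main obstacle, since they require carefully tracking the competing polynomial, logarithmic and exponential scales rather than any single clever choice of $f$.
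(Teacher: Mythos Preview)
Your proposal is correct and follows essentially the same route as the paper: the same choices of $f$ in the Young inequality \eqref{eq:4.1} for \eqref{eq:4.2}, \eqref{eq:4.3}, \eqref{eq:4.6}, the same substitution $x\mapsto e^x$ into \eqref{eq:4.3} with $\delta=2$ for \eqref{eq:4.7}, and the same asymptotic absorption argument for \eqref{eq:4.4}--\eqref{eq:4.5}. The only cosmetic difference is that for \eqref{eq:4.8} the paper picks $f(s)=\mu s^{\delta-1}$ (and then bounds the resulting $1/\delta$ and $1/\delta^*$ factors by $1$) whereas you pick $f(s)=\mu\delta s^{\delta-1}$ and invoke $\delta^*\delta^{1/(\delta-1)}\ge1$; both work.
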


\begin{proof}
Inequalities \eqref{eq:4.2} and \eqref{eq:4.3} follow immediately by picking
$$f(x)=\exp\left(\frac{x^\delta}{\mu^\delta}\right)-1\ \ {\rm and}\ \ f(x)=\exp\left(\frac{(\ln(1+x))^\delta}{\mu^\delta}\right)-1,$$
respectively, in \eqref{eq:4.1}. Observe that for each $q>1$ and $\eps>0$, we have
$$
\lim\limits_{x\To +\infty} \frac{\eps\exp\left(\frac{(q-1)x^\delta}{\mu^\delta}\right)}{x}=+\infty,\ \RE \delta>0
$$
and
$$
\lim\limits_{x\To +\infty} \frac{\eps \exp\left(\frac{(q-1)(\ln(1+x))^\delta}{\mu^\delta}\right)}{x}=+\infty,\ \RE \delta>1.\vspace{0.1cm}
$$
Then, there exists a constant $k_{q,\eps,\mu,\delta}>0$ depending only on $(q,\mu,\delta,\eps)$ such that for each $x\geq k_{q,\eps,\mu,\delta}$,
$$
x\leq \eps\exp\left(\frac{(q-1)x^\delta}{\mu^\delta}\right),\ \RE \delta>0
$$
and
$$
x\leq \eps\exp\left(\frac{(q-1)(\ln(1+x))^\delta}{\mu^\delta}\right),\ \RE \delta>1.\vspace{0.1cm}
$$
Thus, the desired inequalities \eqref{eq:4.4} and \eqref{eq:4.5} follow immediately from \eqref{eq:4.2} and \eqref{eq:4.3} together with the last two inequalities. By picking $f(x)=\exp(\frac{x}{\mu})-1$ in \eqref{eq:4.1}, we have \eqref{eq:4.6}. Furthermore, note that for each $q>1$ and $x\geq 0$, we have
$0\leq \ln(1+\exp(x))-\ln(\exp(x))=\ln(1+\exp(-x))\leq \ln 2$,
$$
\left(\ln(1+\exp(x))\right)^2\leq \left(\ln 2+x\right)^2\leq \left(1+\frac{2}{q-1}\right)(\ln 2)^2+\frac{q+1}{2} x^2
$$
and
$$
\exp(x)\leq \exp\left(\frac{q-1}{2\mu^2}x^2+\frac{\mu^2}{2(q-1)}\right).\vspace{0.1cm}
$$
By letting $\exp(x)$ instead of $x$ and $\delta=2$ in \eqref{eq:4.3}, inequality \eqref{eq:4.7} follows immediately by picking
$$
\bar C_{\mu,q}:=\exp\left(\frac{\mu^2}{2(q-1)}+\frac{(q+1)(\ln 2)^2}{(q-1)\mu^2}\right).
$$
Finally, by picking $f(x)=\mu x^{\delta-1}$ in \eqref{eq:4.1}, we get \eqref{eq:4.8}. The proof is then complete.
\end{proof}

\begin{rmk}\label{rmk:4.2}
We make the following remarks regarding several inequalities in \cref{pro:4.1}.
\begin{itemize}
\item [(i)] Inequality \eqref{eq:4.6} can be compared to the well-known Fenchel inequality:
\begin{equation}\label{eq:4.9}
xy\leq \exp(x)+y(\ln y-1),\ \ \RE x\in\R,\ \RE y>0.
\end{equation}
It is easy to check that the inequality \eqref{eq:4.6} plays the same role as \eqref{eq:4.9} in \cite{DelbaenHuRichou2011AIHPPS} and \cite{DelbaenHuRichou2015DCDS}.

\item [(ii)] Inequality \eqref{eq:4.7} can be compared to the following one
\begin{equation}\label{eq:4.10}
y\exp(x) \leq \exp\left(\frac{x^2}{\mu^2}\right)+\exp(\mu^2)y\exp\left(\mu \sqrt{\ln(1+y)}\right), \ \ \RE x\in \R,\ \RE y\geq 0,\ \RE \mu>0,
\end{equation}
which comes from Lemma 2.4 in \citet{HuTang2018ECP}. Note that the claim of $\mu>\gamma \sqrt{2T}$ for some certain positive constant $\gamma>0$ is equivalent to that of $\mu>q\gamma \sqrt{2T}$ for some suitable constant $q>1$. It is not hard to verify that \eqref{eq:4.7} plays the same role as \eqref{eq:4.10} in \cite{HuTang2018ECP}, \cite{BuckdahnHuTang2018ECP}, \cite{FanHu2019ECP}, \cite{OKimPak2021CRM} and \cite{FanHuTang2023SPA}.

\item [(iii)] Inequality \eqref{eq:4.8} can be compared to the classical Young inequality
\begin{equation}\label{eq:4.11}
xy\leq \frac{1}{\delta} x^{\delta}+\frac{1}{\delta^*}y^{\delta^*},\ \ \RE x,y\geq 0,\ \RE \delta>1.
\end{equation}
It seems to be more convenient to use \eqref{eq:4.8} than \eqref{eq:4.11}.
\end{itemize}
\end{rmk}

By Gronwall's inequality,  we have

\begin{pro}\label{pro:4.3}
Let $(q_t)_{t\in\T}$ {  be} an $(\F_t)$-progressively measurable $\R^d$-valued process such that $\ps$, $\int_0^T |q_s|^2\dif s<+\infty$. Define its stochastic exponential by $L_t^q:=\exp(\int_{0}^{t}q_s\cdot \dif B_s-\frac{1}{2}\int_{0}^{t}| q_s|^2\dif s), \ t\in[0,T]$. Then, we have
\begin{equation}\label{eq:4.12}
\E\left[L^q_{T}\ln(1+L^q_{T})\right]\leq \frac{1}{2}\E\left[\int_0^{T} L^q_t|q_t|^2\dif t\right]+\ln 2
\end{equation}
and
\begin{equation}\label{eq:4.13}
\E\left[L^q_{T}\left[\ln(1+L^q_{T})\right]^{\frac{\alpha^*}{2}}\right]\leq
C_{\alpha,T} \E\left[\int_0^{T} L^q_t|q_t|^{\alpha^*}\dif t\right]+C_{\alpha,T},
\end{equation}
where $C_{\alpha,T}$ is a positive constant depending only on $(\alpha,T)$, { and $\alpha^*$ is the conjugate of $\alpha$}. Furthermore, for each $\mu,\eps>0$, there exists a positive constant $\tilde C_{\mu,\eps,\gamma,\lambda,T}>0$ depending only on $(\mu,\eps,\gamma, \lambda, T)$ such that
\begin{equation}\label{eq:4.14}
\E\left[L^q_{T}\exp\left(\mu \left[\ln(1+L^q_{T})\right]^{\frac{1}{1+2\lambda}}\right)\right]
\leq \Dis \eps\E\left[\int_0^T L_t^q \exp\left(2{\gamma^{-\frac{1}{\lambda}}}|q_t|^{\frac{1}{\lambda}}\right)\dif t\right]+ \tilde C_{\mu,\eps,\gamma,\lambda,T}.\vspace{0.2cm}
\end{equation}
\end{pro}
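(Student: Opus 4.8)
The three estimates share one mechanism: apply It\^o's formula to $t\mapsto\Theta(L^q_t)$ for a suitably chosen nonnegative convex test function $\Theta$, and exploit that $L^q$ solves $\dif L^q_t=L^q_t q_t\cdot\dif B_t$, so that $\dif\langle L^q\rangle_t=(L^q_t)^2|q_t|^2\dif t$. Since $L^q$ is a nonnegative continuous local martingale and each $\Theta$ I use is convex with $\Theta''\geq0$, the drift $\frac12\Theta''(L^q_t)(L^q_t)^2|q_t|^2$ is nonnegative; localizing with $\tau_n:=\inf\{t:L^q_t+\int_0^t(L^q_s)^2|q_s|^2\dif s\geq n\}\wedge T$, the stochastic integral has zero expectation on $[0,t\wedge\tau_n]$, and letting $n\To\infty$ by Fatou's lemma on the left and monotone convergence on the right yields
$$\E[\Theta(L^q_t)]\leq \Theta(1)+\frac12\E\Big[\int_0^t \Theta''(L^q_s)(L^q_s)^2|q_s|^2\dif s\Big],\quad t\in\T.$$
The whole task is then to bound $\Theta''(x)x^2$ by the right-hand expressions and, for the last two estimates, to close with Gronwall's inequality.

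For \eqref{eq:4.12} I take $\Theta(x)=x\ln(1+x)$, for which $\Theta''(x)\,x=\frac{(2+x)x}{(1+x)^2}<1$ for all $x\geq0$; hence $\frac12\Theta''(x)x^2<\frac12 x$, and with $\Theta(1)=\ln 2$ the displayed inequality at $t=T$ gives \eqref{eq:4.12} directly, with no Gronwall step needed. For \eqref{eq:4.13}, set $p:=\alpha^*/2>1$ and $\Theta(x)=x(\ln(1+x))^{p}$ (which is convex since $p>1$). A direct differentiation shows $\Theta''(x)x^2\leq C_{\alpha}\,x(\ln(1+x))^{p-1}$ for all $x\geq0$ (the ratio $\Theta''(x)x^2/[x(\ln(1+x))^{p-1}]$ is bounded, controlled at infinity by the dominant term $p(\ln(1+x))^{p-1}x^2/(1+x)$ and vanishing at the origin). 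Applying the Young inequality \eqref{eq:4.8}/\eqref{eq:4.11} with the conjugate pair $\alpha^*/2$ and $\alpha^*/(\alpha^*-2)$ to the product $(\ln(1+x))^{p-1}\cdot|q|^2$, and using the crucial exponent identity $(p-1)\cdot\frac{\alpha^*}{\alpha^*-2}=p$, I obtain $\frac12\Theta''(x)x^2|q|^2\leq \eps\,x(\ln(1+x))^{p}+C_\eps\,x|q|^{\alpha^*}$, i.e. $\frac12\Theta''(L^q_s)(L^q_s)^2|q_s|^2\leq\eps\,\Theta(L^q_s)+C_\eps L^q_s|q_s|^{\alpha^*}$. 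Writing $m(t):=\E[\Theta(L^q_t)]$ gives $m(t)\leq (\ln2)^{p}+C_\eps\E[\int_0^T L^q_s|q_s|^{\alpha^*}\dif s]+\eps\int_0^t m(s)\dif s$, and Gronwall's inequality (fixing $\eps=1$) yields \eqref{eq:4.13} with a constant $C_{\alpha,T}$ depending only on $(\alpha,T)$.

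The delicate case is \eqref{eq:4.14}, which I expect to be the main obstacle. Here I use $\Theta(x)=x\exp(\mu(\ln(1+x))^{\beta})$ with $\beta:=1/(1+2\lambda)\in(0,1)$, precisely the test function whose growth matches the left-hand side. Writing $G(x)=\exp(\mu L^\beta)$ with $L=\ln(1+x)$ and $a(x)=\mu\beta L^{\beta-1}/(1+x)$, one finds $\Theta''(x)x^2=G(x)\,[2a x^2+a^2x^3+a'x^3]$, and the key structural point is that the two terms linear in $a$ (namely $2ax^2$ and $a'x^3$) have cancelling leading behaviour, leaving $\Theta''(x)x^2\leq C_{\mu,\beta}\,(\ln(1+x))^{2\beta-2}\,\Theta(x)$; since $\beta<1$ the logarithmic factor $L^{2\beta-2}$ decays, and this decay is exactly the room the argument needs. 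I then bound the drift, after factoring out the weight $L^q_s$, by applying inequality \eqref{eq:4.5} with $\delta=1+2\lambda$ (so that $1/\delta=\beta$) to the product $\exp(\mu L^\beta)L^{2\beta-2}\cdot|q|^2$: this produces the target term $\eps\exp(2\gamma^{-1/\lambda}|q|^{1/\lambda})$ together with a term of the form $C\exp(\mu L^\beta)$ and a constant. The hard accounting is the exponent balance forced by $\beta=1/(1+2\lambda)$: the residual logarithmic exponent works out to $-2\lambda/(1+2\lambda)<0$, so the absorbable term is genuinely $\leq C\,\Theta(x)$ rather than something of larger order. Multiplying back by $L^q_s$ turns these into $\eps\,L^q_s\exp(2\gamma^{-1/\lambda}|q_s|^{1/\lambda})$, $C\,\Theta(L^q_s)$ and $C\,L^q_s$; the last integrates to a constant since $\E[L^q_s]=1$, the middle feeds Gronwall, and the first is the desired right-hand side, so Gronwall's inequality closes \eqref{eq:4.14}.
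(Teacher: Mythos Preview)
Your treatment of \eqref{eq:4.12} and \eqref{eq:4.13} is essentially the paper's argument; the only cosmetic difference is that the paper shifts the logarithm by $e$ rather than $1$ in the test function for \eqref{eq:4.13}, which keeps $\ln(e+x)\geq 1$ and avoids tracking the behaviour near the origin separately.

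For \eqref{eq:4.14}, however, there is a genuine error in your second-derivative bound. With $\Theta(x)=xG(x)$, $G=\exp(\mu L^\beta)$, $L=\ln(1+x)$ and $a=\mu\beta L^{\beta-1}/(1+x)$, one has $\Theta''(x)x^2=G\big[(2a+xa')x^2+a^2x^3\big]$. The combination $2ax^2+a'x^3$ does \emph{not} cancel at leading order: writing out $a'=-\mu\beta L^{\beta-2}\big[(1-\beta)+L\big]/(1+x)^2$, one finds $2ax^2+a'x^3=\mu\beta L^{\beta-2}(1+x)^{-2}\big[2L+Lx-(1-\beta)x\big]x^2\sim \mu\beta L^{\beta-1}x$ as $x\to\infty$, so the dominant term in $\Theta''(x)x^2$ is of order $L^{\beta-1}\Theta(x)$, not $L^{2\beta-2}\Theta(x)$. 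Your claimed bound is too strong by a full factor $L^{\beta-1}$ (recall $\beta<1$, so $L^{\beta-1}\gg L^{2\beta-2}$).

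This matters, because the subsequent Young step must absorb exactly an $L^{\beta-1}$ decay, not $L^{2\beta-2}$. Your proposed use of \eqref{eq:4.5} with $\delta=1+2\lambda$ does not produce the claimed pair of terms either: if you feed it $y=G L^{2\beta-2}$, the term $y\exp(\mu(\ln(1+y))^{\beta})$ carries an extra factor $\exp(\mu^{1+\beta}L^{\beta^2})$ and is not bounded by $CG$. The paper's route is to keep the correct bound $\tilde l''(x)x^2\lesssim x\,G\,L^{\beta-1}$ (it uses a large shift $\tilde k$ in place of $1$ to make this clean for all $x\geq0$), then apply the Young inequality \eqref{eq:4.2} with $\delta=1/(2\lambda)$ to the product $(G/L^{1-\beta})\cdot|q|^2$; the exponent identity $2\lambda\beta=1-\beta$ makes the logarithmic factor disappear and yields $G\,L^{\beta-1}|q|^2\leq C\mu^{2\lambda}G+|q|^2\exp(\gamma^{-1/\lambda}|q|^{1/\lambda})+C$, after which Gronwall closes and a final elementary step upgrades $|q|^2\exp(\gamma^{-1/\lambda}|q|^{1/\lambda})$ to $\eps\exp(2\gamma^{-1/\lambda}|q|^{1/\lambda})+C_\eps$. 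Your overall strategy is right, but for \eqref{eq:4.14} you need the correct order $L^{\beta-1}$ and the matching Young pairing.
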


\begin{proof}
It suffices to consider the case that those terms in the right hand side of \eqref{eq:4.12}-\eqref{eq:4.14} are finite. Note that
\begin{equation}\label{eq:4.15}
\dif L_t^q=L_t^q q_t\cdot \dif B_t,\ \ t\in \T.\vspace{0.2cm}
\end{equation}

We first verify \eqref{eq:4.12}. Define  the test function $l(x)=x\ln(1+x), \ x>0$. Then, for $x>0$, we have
$$
l'(x)=\ln (1+x)+\frac{x}{1+x}>0\ \ {\rm and}\ \ 0<l''(x)=\frac{1}{1+x}+\frac{1}{(1+x)^2}< \frac{1}{x}.
$$
It then follows from It\^{o}'s formula that, in view of \eqref{eq:4.15},
\begin{equation}\label{eq:4.16}
\dif l(L^q_t)=l'(L^q_t)L_t^q q_t\cdot \dif B_t+\frac{1}{2}l''(L^q_t)(L_t^q)^2 |q_t|^2\dif t\leq l'(L^q_t)L_t^q q_t\cdot \dif B_t+\frac{1}{2}L_t^q |q_t|^2\dif t,\ \ t\in \T.
\end{equation}
For each $n\geq 1$, define the following $(\F_t)$-stopping time:
$$
\tau_n:=\inf\{t\geq 0: \int_{0}^{t}\left(l'(L_s^q)L_s^q\right)^2 |q_s|^2\dif s\geq n\}\wedge T.
$$
Since
$$\int_{0}^{T}\left(l'(L_s^q)L_s^q\right)^2 |q_s|^2\dif s\leq \sup_{t\in\T}\left(l'(L_t^q)L_t^q\right)^2
\int_{0}^{T}|q_s|^2\dif s<+\infty\ \ \ps,\vspace{0.1cm}$$
it follows that $\tau_n\To T$ as $n$ tends to infinity. By \eqref{eq:4.16} we deduce that for each $n\geq 1$,
$$
\E[l(L^q_{\tau_n})]-\ln 2\leq \frac{1}{2} \E\left[\int_0^{\tau_n} L_t^q |q_t|^2\dif t\right].
$$
By sending $n$ to infinity in the last inequality and using Fatou's lemma, we can easily obtain \eqref{eq:4.12}.\vspace{0.2cm}

We now prove \eqref{eq:4.13}. Define the test function $\bar l(x)=(e+x)[\ln(e+x)]^{\frac{\alpha^*}{2}}, \ x\in\R_+$. Then, for $x\in\R_+$, we have
$$
\bar l'(x)=[\ln (e+x)]^{\frac{\alpha^*}{2}}+\frac{\alpha^*}{2}[\ln(e+x)]^{\frac{\alpha^*}{2}-1}>0
$$
and
$$
0<\bar l''(x) = \Dis \frac{\alpha^*}{2}[\ln(e+x)]^{\frac{\alpha^*}{2}-1} \frac{1}{e+x}+\frac{\alpha^*(\alpha^*-2)}{4}[\ln(e+x)]^{\frac{\alpha^*}{2}-2}
\frac{1}{e+x}<\Dis \frac{(\alpha^*)^2}{4}\frac{[\ln(e+x)]^{\frac{\alpha^*}{2}-1}}{x}.
$$
Furthermore, for $x,y\in\R_+$,  by Young's inequality we get
$$
[\ln(e+x)]^{\frac{\alpha^*}{2}-1}y^2=\left([\ln(e+x)]^{\frac{\alpha^*}{2}}
\right)^{\frac{\alpha^*-2}{\alpha^*}}\left(y^{\alpha^*}\right)^{\frac{2}{\alpha^*}}\leq \frac{\alpha^*-2}{\alpha^*}[\ln(e+x)]^{\frac{\alpha^*}{2}}+\frac{2}{\alpha^*}y^{\alpha^*}.
$$
It then follows from It\^{o}'s formula that for each $t\in\T$,
\begin{equation}\label{eq:4.17}
\begin{array}{lll}
\dif \bar l(L^q_t)&=& \Dis \bar l'(L^q_t)L_t^q q_t\cdot \dif B_t+\frac{1}{2}\bar l''(L^q_t)(L_t^q)^2 |q_t|^2\dif t\\
&\leq & \Dis \bar l'(L^q_t)L_t^q q_t\cdot \dif B_t+L_t^q \Big(
\frac{1}{8}\alpha^*(\alpha^*-2)[\ln(e+L_t^q )]^{\frac{\alpha^*}{2}}
+\frac{\alpha^*}{4} |q_t|^{\alpha^*}\Big)\dif t\\
&\leq & \Dis \bar l'(L^q_t)L_t^q q_t\cdot \dif B_t+\Big(
\frac{1}{8}\alpha^*(\alpha^*-2)\bar l(L^q_t)
+\frac{\alpha^*}{4}L_t^q |q_t|^{\alpha^*}\Big)\dif t.
\end{array}
\end{equation}
For each $n\geq 1$, define the following $(\F_t)$-stopping time:
$$
\tau_n:=\inf\{t\geq 0: \int_{0}^{t}\left(\bar l'(L_s^q)L_s^q\right)^2 |q_s|^2\dif s\geq n\}\wedge T.
$$
Since
$$\int_{0}^{T}\left(\bar l'(L_s^q)L_s^q\right)^2 |q_s|^2\dif s\leq \sup_{t\in\T}\left(\bar l'(L_t^q)L_t^q\right)^2
\int_{0}^{T}|q_s|^2\dif s<+\infty\ \ \ps,\vspace{0.1cm}$$
it follows that $\tau_n\To T$ as $n$ tends to infinity. By \eqref{eq:4.17}, we  deduce that for each $n\geq 1$ and $t\in \T$,
$$
\begin{array}{lll}
\E[\bar l(L^q_{t\wedge \tau_n})]-e &\leq & \Dis \frac{1}{8}\alpha^*(\alpha^*-2)\E\left[\int_0^{t\wedge \tau_n}\bar l(L^q_{s}) \dif s\right] +\frac{\alpha^*}{4}\E\left[\int_0^{t\wedge \tau_n} L_s^q |q_s|^{\alpha^*}\dif s\right]\\
 &\leq & \Dis \frac{1}{8}\alpha^*(\alpha^*-2)\int_0^t \E[\bar l(L^q_{s\wedge \tau_n})] \dif s +\frac{\alpha^*}{4}\E\left[\int_0^T L_s^q |q_s|^{\alpha^*}\dif s\right].
\end{array}
$$
Applying Gronwall's inequality to the last inequality yields that for each $n\geq 1$,
$$
\E[\bar l(L^q_{t\wedge \tau_n})]\leq \left(\frac{\alpha^*}{4}\E\left[\int_0^T L_s^q |q_s|^{\alpha^*}\dif s\right]+e\right)\exp\left(\frac{1}{8}\alpha^*(\alpha^*-2)t\right),\ \ t\in \T.
$$
By letting $t=T$ and sending $n$ to infinity in the last inequality, and then by using Fatou's lemma, we can obtain the desired inequality \eqref{eq:4.13}.\vspace{0.2cm}

Finally, we verify \eqref{eq:4.14}. Given $\mu>0$ and define the test function $\tilde l(x)=(\tilde k +x)\exp\left(\mu [\ln(\tilde k +x)]^{\frac{1}{1+2\lambda}}\right), \ x\in\R_+$, where $\tilde k$ is a fixed positive constant large enough such that for $x\in\R_+$,
$$
\tilde l'(x)=\tilde l(x)\frac{\frac{\mu}{1+2\lambda}+[\ln(\tilde k +x)]^{\frac{2\lambda}{1+2\lambda}} }{(\tilde k +x)[\ln(\tilde k +x)]^{\frac{2\lambda}{1+2\lambda}}} >0
$$
and
\begin{equation}\label{eq:4.18}
\begin{array}{lll}
0<\tilde l''(x)&=& \Dis \tilde l(x)\frac{\frac{\mu}{1+2\lambda}\left[
\ln(\tilde k +x)+\frac{\mu}{1+2\lambda}[\ln(\tilde k +x)]^{\frac{1}{1+2\lambda}}
-\frac{2\lambda}{1+2\lambda}
\right]}
{(\tilde k +x)^2[\ln(\tilde k +x)]^{\frac{1+4\lambda}{1+2\lambda}}}\vspace{0.2cm}\\
&<& \Dis \frac{2\mu}{1+2\lambda}\frac{\exp\left(\mu [\ln(\tilde k +x)]^{\frac{1}{1+2\lambda}}\right)}
{x [\ln(\tilde k +x)]^{\frac{2\lambda}{1+2\lambda}}}.\vspace{0.1cm}
\end{array}
\end{equation}
Furthermore, it follows from (4.2) with $(\mu,\delta)=(\gamma^2,\frac{1}{2\lambda})$ that there exists a positive constant $\bar k_{\lambda,\gamma}$ depending only on $\lambda$ such that for $x>1$ and $y\geq 0$,
$$
xy\leq y\exp
\left(\gamma^{-\frac{1}{\lambda}}y^{\frac{1}{2\lambda}}\right)+\gamma^2 x[\ln(1+x)]^{2\lambda}\leq 2\gamma^2 x(\ln x)^{2\lambda}+y\exp
\left(\gamma^{-\frac{1}{\lambda}}y^{\frac{1}{2\lambda}}\right)+\bar k_{\lambda,\gamma},
$$
and then for $x,y\in\R_+$ we have, in view of $\tilde k$ being large enough,
\begin{equation}\label{eq:4.19}
\begin{array}{lll}
\Dis \frac{\exp\left(\mu [\ln(\tilde k +x)]^{\frac{1}{1+2\lambda}}\right)}
{[\ln(\tilde k +x)]^{\frac{2\lambda}{1+2\lambda}}}y^2 &\leq & \Dis 2\gamma^2
\frac{\exp\left(\mu [\ln(\tilde k +x)]^{\frac{1}{1+2\lambda}}\right)}
{[\ln(\tilde k +x)]^{\frac{2\lambda}{1+2\lambda}}} \left(\mu [\ln(\tilde k +x)]^{\frac{1}{1+2\lambda}}\right)^{2\lambda}\vspace{0.1cm}\\
&& \Dis \ \ +y^2\exp\left(\gamma^{-\frac{1}{\lambda}}y^{\frac{1}{\lambda}}\right)
+\bar k_{\lambda,\gamma}\vspace{0.1cm}\\
&= & \Dis 2\gamma^2\mu^{2\lambda}\exp\left(\mu [\ln(\tilde k +x)]^{\frac{1}{1+2\lambda}}\right)
+y^2\exp\left(\gamma^{-\frac{1}{\lambda}}y^{\frac{1}{\lambda}}\right)
+\bar k_{\lambda,\gamma}.
\end{array}
\end{equation}
It then follows from It\^{o}'s formula together with \eqref{eq:4.18} and \eqref{eq:4.19} that for each $t\in\T$,
\begin{equation}\label{eq:4.20}
\begin{array}{lll}
\dif \tilde l(L^q_t)&=& \Dis \tilde l'(L^q_t)L_t^q q_t\cdot \dif B_t+\frac{1}{2}\tilde l''(L^q_t)(L_t^q)^2 |q_t|^2\dif t\leq \tilde l'(L^q_t)L_t^q q_t\cdot \dif B_t\vspace{0.1cm}\\
&& \Dis \ \ +\frac{\mu L_t^q}{1+2\lambda}\left(2\gamma^2\mu^{2\lambda}\exp\left(\mu [\ln(k+L_t^q)]^{\frac{1}{1+2\lambda}}\right)
+|q_t|^2\exp\left(\gamma^{-\frac{1}{\lambda}}|q_t|^{\frac{1}{\lambda}}\right)
+\bar k_{\lambda,\gamma}\right)\dif t\vspace{0.1cm}\\
&\leq & \Dis \tilde l'(L^q_t)L_t^q q_t\cdot \dif B_t+\frac{\mu }{1+2\lambda}\left(2\gamma^2\mu^{2\lambda}\tilde l(L_t^q)
+L_t^q |q_t|^2\exp\left(\gamma^{-\frac{1}{\lambda}}|q_t|^{\frac{1}{\lambda}}\right)
+L_t^q \bar k_{\lambda,\gamma}\right)\dif t.
\end{array}
\end{equation}
For $n\geq 1$, define the following $(\F_t)$-stopping time
$$
\tau_n:=\inf\{t\geq 0: \int_{0}^{t}\left(\tilde l'(L_s^q)L_s^q\right)^2 |q_s|^2\dif s\geq n\}\wedge T.
$$
Since
$$\int_{0}^{T}\left(\tilde l'(L_s^q)L_s^q\right)^2 |q_s|^2\dif s\leq \sup_{t\in\T}\left(\tilde l'(L_t^q)L_t^q\right)^2
\int_{0}^{T}|q_s|^2\dif s<+\infty\ \ \ps,\vspace{0.1cm}$$
it follows that $\tau_n\To T$ as $n$ tends to infinity. We deduce from \eqref{eq:4.20} that for $n\geq 1$ and $t\in \T$,
$$
\begin{array}{l}
\E[\tilde l(L^q_{t\wedge \tau_n})]-\tilde k\exp\left(\mu (\ln \tilde k)^{\frac{1}{1+2\lambda}}\right)\vspace{0.1cm}\\
\ \ \Dis \leq \frac{2\gamma^2\mu^{2\lambda+1}}{1+2\lambda}\E\left[\int_0^{t\wedge \tau_n}\tilde l(L^q_{s}) \dif s\right] +\frac{\mu}{1+2\lambda}\E\left[\int_0^{t\wedge \tau_n} L_s^q |q_s|^2\exp\left(\gamma^{-\frac{1}{\lambda}}|q_s|^{\frac{1}{\lambda}}\right)\dif s\right]+\frac{\mu\bar k_{\lambda,\gamma} T}{1+2\lambda}\vspace{0.1cm}\\
\ \ \Dis \leq  \frac{2\gamma^2\mu^{2\lambda+1}}{1+2\lambda}\int_0^t \E\left[\tilde l(L^q_{s\wedge \tau_n})\right]\dif s +\frac{\mu}{1+2\lambda}\E\left[\int_0^T L_s^q |q_s|^2\exp\left(\gamma^{-\frac{1}{\lambda}}|q_s|^{\frac{1}{\lambda}}\right)\dif s\right]+\frac{\mu\bar k_{\lambda,\gamma} T}{1+2\lambda}.
\end{array}
$$
Applying Gronwall's inequality to the last inequality, we have  for each $n\geq 1$,
$$
\begin{array}{l}
\Dis \E[\tilde l(L^q_{t\wedge \tau_n})]\leq \left(\frac{\mu}{1+2\lambda}\E\left[\int_0^T\!\!\! L_s^q |q_s|^2\exp\left(\gamma^{-\frac{1}{\lambda}}|q_s|^{\frac{1}{\lambda}}\right)\dif s\right]+\frac{\mu\bar k_{\lambda,\gamma} T}{1+2\lambda}+\tilde k\exp\left(\mu (\ln \tilde k)^{\frac{1}{1+2\lambda}}\right) \right)\vspace{0.1cm}\\
\hspace*{2.2cm}\Dis \times \exp\left(\frac{2\gamma^2\mu^{2\lambda+1}}{1+2\lambda}t\right),\quad \ t\in \T.
\end{array}
$$
Letting $t=T$ and sending $n$ to infinity in the last inequality, and then using Fatou's lemma, we obtain that there exists a positive constant $C_{\mu, \gamma, \lambda, T}>0$ depending only on $(\mu, \gamma, \lambda, T)$ such that
\begin{equation}\label{eq:4.21}
\E\left[L^q_{T}\exp\left(\mu \left[\ln(1+L^q_{T})\right]^{\frac{1}{1+2\lambda}}\right)\right]
\leq \Dis C_{\mu, \gamma, \lambda, T}\E\left[\int_0^T L_s^q |q_s|^2\exp\left(\gamma^{-\frac{1}{\lambda}}|q_s|^{\frac{1}{\lambda}}\right)\dif s\right]+ C_{\mu, \gamma, \lambda, T}.
\end{equation}
Since for $\eps>0$,
$$
\lim\limits_{x\To +\infty} \frac{\eps\exp\left(\gamma^{-\frac{1}{\lambda}}|x|^{\frac{1}{\lambda}}\right)}{C_{\mu, \gamma, \lambda, T}x^2}=+\infty,\vspace{0.1cm}
$$
there exists a positive constant $\bar C_{\eps, \mu, \gamma, \lambda, T}>0$ depending only on $(\eps, \mu, \gamma, \lambda, T)$ such that for $x\in \R^+$,
\begin{equation}\label{eq:4.22}
C_{\mu, \gamma, \lambda, T}\, x^2\exp\left(\gamma^{-\frac{1}{\lambda}}|x|^{\frac{1}{\lambda}}\right)\leq \eps \exp\left(2\gamma^{-\frac{1}{\lambda}}|x|^{\frac{1}{\lambda}}\right)+\bar C_{\eps, \mu, \gamma, \lambda, T}.
\end{equation}
The inequality \eqref{eq:4.14} follows immediately by combining \eqref{eq:4.21} and \eqref{eq:4.22}. The proof is complete.
\end{proof}

\subsection{Proof of Assertion (i) of \cref{thm:3.1}\vspace{0.2cm}}

Assume that the core function $f$ satisfies (A0) and (A1), and that
\begin{equation}\label{eq:4.26}
|\xi|+\int_0^T h_t{\rm d}t \in \bigcap_{\mu>0} \exp(\mu L).
\end{equation}
First of all, we show that $(\bar q_t)_{t\in\T}\in \qcal(\xi,f)$, and then the space $\qcal(\xi,f)$ is nonempty. Indeed, since the process $(\bar q_t)_{t\in\T}$ is bounded, the stochastic exponential $L_t^{\bar q}:=\exp(\int_{0}^{t}\bar q_s\cdot \dif B_s-\frac{1}{2}\int_{0}^{t}|\bar q_s|^2\dif s), \ t\in[0,T]$ is a uniformly integrable martingale, which has moments of any order. From (A0) and (A1) we can verify that $\as$, $|f(t,\bar q_t)|\leq h_t$, and then by \eqref{eq:4.26} and the fact that $\cap_{\mu>0}\exp(\mu L)\subset L^2$,
\begin{equation}\label{eq:4.27}
\begin{array}{lll}
\Dis \E_{\Q^{\bar q}}\left[|\xi|+\int_0^T (h_s+|f(s,\bar q_s)|)\dif s\right]& \leq & \Dis 2\E_{\Q^{\bar q}}\left[|\xi|+\int_0^T h_s\dif s\right]= 2\E\left[\left(|\xi|+\int_0^T h_s\dif s\right)L_T^{\bar q}\right]\vspace{0.2cm}\\
&\leq &\Dis 2\left\{\E\left[\left(|\xi|+\int_0^T h_s\dif s\right)^2\right]\right\}^{\frac{1}{2}}\left\{\E\left[\left(L_T^{\bar q}\right)^2 \right]\right\}^{\frac{1}{2}}<+\infty.
\end{array}
\end{equation}
Hence, $(\bar q_t)_{t\in\T}\in \qcal(\xi,f)$.\vspace{0.2cm}

In the sequel, by (A1) we  deduce that $\as$, for each $z\in \R^d$,
$$
g(\omega,t,z)=\sup_{q\in \R^d} \left\{z\cdot q-f(\omega,t,q)\right\}\leq \sup_{q\in \R^d} \left\{z\cdot q-\frac{1}{2\gamma} |q|^2 +h_t(\omega)\right\}
\leq \frac{\gamma}{2}|z|^2+h_t(\omega),
$$
which together with \eqref{eq:3.5} yields that the generator $g$ defined in \eqref{eq:3.4} satisfies assumptions (H0) and (H1) with $\bar h_t=h_t+k^2/(2\gamma)$. It then follows from (i) of \cref{pro:2.2} and \eqref{eq:4.26} that BSDE \eqref{eq:3.3} admits a unique adapted solution $(Y_t,Z_t)_{t\in\T}$ such that
\begin{equation}\label{eq:4.31}
\sup_{t\in \T}|Y_t|+\int_0^T h_t{\rm d}t \in \bigcap\limits_{\mu>0}\exp(\mu L).
\end{equation}

To show the dual representation, we need to further verify that $U_t(\xi)=Y_t$ for each $t\in \T$, where $U_t(\xi)$ is defined in \eqref{eq:3.2}. We first prove that for each $(q_t)_{t\in\T}\in \qcal(\xi,f)$, it holds that
\begin{equation}\label{eq:4.32}
\E_{\Q^q}\left[\left.\xi+\int_t^Tf(s,q_s)\dif s\right|\F_t\right]\geq Y_t,\ \ t\in\T.
\end{equation}
According to (A0) and \eqref{eq:3.4}, by the dual representation of a convex function, we know that
\begin{equation}\label{eq:4.33}
f(\omega,t,q)=\sup_{z\in \R^d}(z\cdot q -g(\omega,t,z)),\ \ \RE (\omega,t,q)\in \Omega\tim \T\tim \R^d.
\end{equation}
For each $n\geq 1$ and $t\in \T$, define the following stopping time:
\begin{equation}\label{eq:4.34}
\tau_n^t:=\inf\{s\geq t: \int_{t}^{s} |Z_u|^2\dif u\geq n\}\wedge T.
\end{equation}
It follows from \eqref{eq:3.3} and \eqref{eq:4.33} that for each $n\geq 1$,
\begin{equation}\label{eq:4.34*}
\begin{array}{lll}
Y_t&=& \Dis Y_{\tau_n^t}-\int_t^{\tau_n^t} g(s,Z_s)\dif s+\int_t^{\tau_n^t} Z_s\cdot \dif B_s\\
&=& \Dis Y_{\tau_n^t}+\int_t^{\tau_n^t}(Z_s\cdot q_s-g(s,Z_s))\dif s+\int_t^{\tau_n^t} Z_s\cdot \dif B^q_s\\
&\leq & \Dis Y_{\tau_n^t}+\int_t^{\tau_n^t}f(s,q_s)\dif s+\int_t^{\tau_n^t} Z_s\cdot \dif B^q_s,\ \ t\in \T,
\end{array}
\end{equation}
where the shifted Brownian motion $B^q_t:=B_t-\int_0^t q_s\dif s,\ t\in\T$ is a standard $d$-dimensional Brownian motion under the new probability measure $\Q^q$ in view of Girsanov's theorem. Taking the mathematical expectation conditioned on $\F_t$ with respect to $\Q^q$ in the last inequality, we obtain that for each $n\geq 1$,
\begin{equation}\label{eq:4.35}
Y_t\leq \E_{\Q^q}\left[\left.Y_{\tau_n^t}+\int_t^{\tau_n^t}f(s,q_s)\dif s\right|\F_t\right], \ \ t\in \T.
\end{equation}
On the other hand, by (A1) and \eqref{eq:3.1} we have
$$
\frac{1}{2\gamma}\E\left[\int_0^T L^q_t |q_t|^2\dif t\right]
=\frac{1}{2\gamma}\E_{\Q^q}\left[\int_0^T |q_t|^2\dif t\right]\leq \E_{\Q^q}\left[\int_0^T (h_t+|f(t,q_t)|)\dif t\right]<+\infty,
$$
which together with \eqref{eq:4.12} yields that $\E[L^q_{T}\ln(1+L^q_{T})]<+\infty$, and then, in view of \eqref{eq:4.6} with $\mu=1$ and \eqref{eq:4.31}, we have
$$
\E_{\Q^q}\left[\sup_{t\in \T}|Y_t|\right]=\E\left[\left(\sup_{t\in \T}|Y_t|\right)L^q_{T}\right]\leq
\E\left[\exp\left(\sup_{t\in \T}|Y_t|\right)\right]+\E\left[L^q_{T}\ln(1+L^q_{T})\right]<+\infty.
$$
Thus, the desired assertion \eqref{eq:4.32} follows by sending $n$ to infinity in \eqref{eq:4.35} and applying Lebesgue's dominated convergence theorem.\vspace{0.2cm}

Next, we set $\tilde q_s\in \partial g(s,Z_s)$ for each $s\in \T$ and prove that $(\tilde q_t)_{t\in\T}\in \qcal(\xi,f)$ and
\begin{equation}\label{eq:4.36}
Y_t=\E_{\Q^{\tilde q}}\left[\left.\xi+\int_t^T f(s,\tilde q_s)\dif s\right|\F_t\right],\ \ t\in\T.
\end{equation}
Since $f(s,\tilde q_s)=Z_s\cdot \tilde q_s-g(s,Z_s)$, by (A1) we have for each $s\in \T$,
$$
g(s,Z_s)\leq Z_s\cdot \tilde q_s-\frac{1}{2\gamma}|\tilde q_s|^2+h_s\leq \gamma |Z_s|^2+\frac{1}{4\gamma}|\tilde q_s|^2-\frac{1}{2\gamma}|\tilde q_s|^2+h_s=\gamma |Z_s|^2-\frac{1}{4\gamma}|\tilde q_s|^2+h_s,
$$
and then
$$
\int_0^T |\tilde q_s|^2\dif s\leq 4\gamma \int_0^T \left(\gamma |Z_s|^2+h_s-g(s,Z_s)\right)\dif s<+\infty.
$$
Moreover, for each $n\geq 1$, define the following stopping time:
\begin{equation}\label{eq:4.37}
\sigma_n:=\inf\{t\geq 0: \int_{0}^{t}\left(|Z_s|^2+|\tilde q_s|^2\right)\dif s\geq n\}\wedge T.
\end{equation}
and define
$$
L_t^{\tilde q}:=\exp\Big(\int_{0}^{t} \tilde q_s\cdot \dif B_s-\frac{1}{2}\int_{0}^{t}|\tilde q_s|^2\dif s\Big), \ t\in[0,T]\ \ {\rm and}\ \ \frac{\dif \Q^{\tilde q}_n}{\dif \p}:=L^{\tilde q}_{\sigma_n}.\vspace{0.1cm}
$$
Then, $\Q^{\tilde q}_n$ is a probability measure on $(\Omega,\F_T)$ equivalent to $\p$ for each $n\geq 1$. Letting $q_s=\tilde q_s{\bf 1}_{[0,\sigma_n]}(s)$ in \eqref{eq:4.12} of \cref{pro:4.3}, we obtain that for each $n\geq 1$,
\begin{equation}\label{eq:4.38}
\E\left[L^{\tilde q}_{\sigma_n}\ln(1+L^{\tilde q}_{\sigma_n})\right]\leq \frac{1}{2}\E\left[\int_0^{\sigma_n} L^{\tilde q}_t|\tilde q_t|^2\dif t\right]+\ln 2.
\end{equation}
Note that $f(s,\tilde q_s)=Z_s\cdot \tilde q_s-g(s,Z_s)$. By using a similar argument to \eqref{eq:4.34*} and \eqref{eq:4.35}, we can obtain that for each $n\geq 1$,
\begin{equation}\label{eq:4.39}
Y_0=\E_{\Q^{\tilde q}_n}\left[Y_{\sigma_n}+\int_0^{\sigma_n}f(s,\tilde q_s)\dif s\right].\vspace{0.1cm}
\end{equation}
On the other hand, by (A1), \eqref{eq:4.6} with $\mu=\frac{1}{2\gamma}$ and \eqref{eq:4.38}, we deduce that
\begin{equation}\label{eq:4.40}
\hspace*{-0.2cm}
\begin{array}{l}
\Dis \E_{\Q^{\tilde q}_n}\left[Y_{\sigma_n}+\int_0^{\sigma_n} f(s,\tilde q_s)\dif s\right]\geq \Dis  -\E\left[\left(|Y_{\sigma_n}|+\int_0^{\sigma_n}h_s \dif s\right) L^{\tilde q}_{\sigma_n}\right]+\frac{1}{2\gamma}
\E\left[\left(\int_0^{\sigma_n} |\tilde q_s|^2\dif s\right)L^{\tilde q}_{\sigma_n}\right]\vspace{0.2cm}\\
\ \ \geq \Dis  \Dis -\frac{1}{2\gamma}\E\left[\exp\left\{2\gamma \left(|Y_{\sigma_n}|+\int_0^{\sigma_n}h_s \dif s\right)\right\}\right]-\frac{1}{2\gamma}\E\left[ L^{\tilde q}_{\sigma_n}\ln\left(1+L^{\tilde q}_{\sigma_n}\right)
\right]+\frac{1}{2\gamma}\E\left[\int_0^{\sigma_n} L^{\tilde q}_s |\tilde q_s|^2\dif s\right]\vspace{0.2cm}\\
\ \ \geq \Dis -\frac{1}{2\gamma}\E\left[\exp\left\{2\gamma \left(\sup\limits_{t\in\T}|Y_t|+\int_0^T h_s \dif s\right)\right\}\right]+\frac{1}{4\gamma}\E\left[\int_0^{\sigma_n} L^{\tilde q}_s |\tilde q_s|^2\dif s\right]
-\frac{\ln 2}{2\gamma}.
\end{array}
\end{equation}
In view of \eqref{eq:4.31}, \eqref{eq:4.39} and \eqref{eq:4.40}, there exists a positive constant $C>0$ independent of $n$ such that $\sup_{n\geq 1}\E[\int_0^{\sigma_n} L^{\tilde q}_s |\tilde q_s|^2\dif s]\leq C$ and then, in view of \eqref{eq:4.38},
\begin{equation}\label{eq:4.41}
\sup_{n\geq 1}\E\left[L^{\tilde q}_{\sigma_n}\ln(1+L^{\tilde q}_{\sigma_n})\right]\leq \frac{C}{2}+\ln 2<+\infty.
\end{equation}
According to the de La Vall\'{e}e-Poussin lemma and the last inequality, we deduce that the random variable sequence $(L^{\tilde q}_{\sigma_n})_{n=1}^{\infty}$ is uniformly integrable, and then $\E[L^{\tilde q}_T]=1$, so $(L^{\tilde q}_t)_{t\in\T}$ is a uniformly integrable martingale. Furthermore, by applying Fatou's lemma and \eqref{eq:4.41}, we obtain
\begin{equation}\label{eq:4.42}
\E\left[L^{\tilde q}_T\ln(1+L^{\tilde q}_T)\right]\leq \liminf_{n\To \infty} \E\left[L^{\tilde q}_{\sigma_n}\ln(1+L^{\tilde q}_{\sigma_n})\right]<+\infty.
\end{equation}
To show that $(\tilde q_t)_{t\in\T}\in \qcal(\xi,f)$, we have to prove that
\begin{equation}\label{eq:4.43}
\E_{\Q^{\tilde q}}\left[|\xi|+\int_0^T(h_s+|f(s,\tilde q_s)|)\, \dif s\right]<+\infty.
\end{equation}
Indeed, by (A1), \eqref{eq:4.6} with $\mu=1$, \eqref{eq:4.26} and \eqref{eq:4.42} we have
\begin{equation}\label{eq:4.44}
\begin{array}{l}
\Dis \E_{\Q^{\tilde q}}\left[|\xi|+\int_0^T(h_s+f^-(s,\tilde q_s))\, \dif s\right]\vspace{0.1cm}\\
\ \ \leq \Dis 2\E_{\Q^{\tilde q}}\left[|\xi|+\int_0^T h_s\, \dif s\right]
\leq \Dis 2\E\left[\exp\left(|\xi|+\int_0^T h_s\, \dif s\right)\right]+2\E\left[L^{\tilde q}_T\ln(1+L^{\tilde q}_T)\right]<+\infty.
\end{array}
\end{equation}
By a similar computation to \eqref{eq:4.34*} and \eqref{eq:4.35}, we  deduce that for each $n\geq 1$,
\begin{equation}\label{eq:4.45}
Y_t=\E_{\Q^{\tilde q}}\left[\left. Y_{\tau^t_n}+\int_t^{\tau^t_n} f(s,\tilde q_s)\, \dif s\right|\F_t\right], \ \ t\in \T.
\end{equation}
Then, in view of \eqref{eq:4.45} with $t=0$, \eqref{eq:4.44}, \eqref{eq:4.6} with $\mu=1$, \eqref{eq:4.31} and \eqref{eq:4.42},
$$
\sup_{n\geq 1}\E_{\Q^{\tilde q}}\left[\int_0^{\tau^0_n} f^+(s,\tilde q_s)\, \dif s\right]\leq \E_{\Q^{\tilde q}}\left[\int_0^T f^-(s,\tilde q_s)\, \dif s\right]+ |Y_0|+\E_{\Q^{\tilde q}}\left[\sup_{t\in\T}|Y_t|\right]<+\infty,
$$
which together with Fatou's lemma and \eqref{eq:4.44} yields \eqref{eq:4.43}. Sending $n$ to infinity in \eqref{eq:4.45} and applying Lebesgue's dominated convergence theorem,  we get \eqref{eq:4.36}.\vspace{0.2cm}

Finally, note that $U_t(\xi)=Y_t$ for each $t\in \T$. According to the existence and uniqueness of BSDE \eqref{eq:3.3} and the related comparison theorem, we easily verify that
the operator $\{U_t(\cdot),\ t\in\T\}$ defined via \eqref{eq:3.2} satisfies (i)-(iv) in the introduction and then constitutes a dynamic concave utility  defined on $\cap_{\mu>0} \exp(\mu L)$. See for example the proof of Theorem 2.16 in \citet{FanHuTang2023SPA} for more details.

\subsection{Proof of Assertion (ii) of \cref{thm:3.1}\vspace{0.2cm}}

Assume that the core function $f$ satisfies (A0) and (A2), and that
\begin{equation}\label{eq:4.46}
|\xi|+\int_0^T h_t{\rm d}t \in \bigcap_{\mu>0} \exp(\mu L^{\frac{2}{\alpha^*}}).
\end{equation}
First of all, according to assumptions (A0) and (A2) together with \eqref{eq:4.27}, \eqref{eq:4.46} and the fact that $\cap_{\mu>0}\exp(\mu L^{\frac{2}{\alpha^*}})\subset L^2$, we can deduce that $(\bar q_t)_{t\in\T}\in \qcal(\xi,f)$, and then the space $\qcal(\xi,f)$ is \vspace{0.2cm} nonempty.

In the sequel, by (A2), \eqref{eq:3.4} and \eqref{eq:4.8} with $\mu=\gamma$ and $\delta=\alpha$ we deduce that $\as$, $\RE z\in \R^d$,
$$
g(\omega,t,z)=\sup_{q\in \R^d} \left\{z\cdot q-f(\omega,t,q)\right\}\leq \sup_{q\in \R^d} \left\{z\cdot q-{\gamma^{-\frac{1}{\alpha-1}}} |q|^{\alpha^*} +h_t(\omega)\right\}
\leq \gamma |z|^{\alpha}+h_t(\omega),
$$
which together with \eqref{eq:3.5} and \eqref{eq:4.8} with $\mu=\gamma$ yields that the generator $g$ defined in \eqref{eq:3.4} satisfies assumptions (H0) and (H2) with $\bar h_t=h_t+{\gamma^{-\frac{1}{\alpha-1}}} k^{\alpha^*}$. It then follows from (ii) of \cref{pro:2.2} and \eqref{eq:4.46} that BSDE \eqref{eq:3.3} admits a unique adapted solution $(Y_t,Z_t)_{t\in\T}$ such that
\begin{equation}\label{eq:4.51}
\sup_{t\in \T}|Y_t|+\int_0^T h_t{\rm d}t\in \bigcap\limits_{\mu>0}\exp(\mu L^{\frac{2}{\alpha^*}}).
\end{equation}

To show the dual representation, we need to further verify that $U_t(\xi)=Y_t$ for each $t\in \T$, where $U_t(\xi)$ is defined in \eqref{eq:3.2}. We first prove that for each $(q_t)_{t\in\T}\in \qcal(\xi,f)$, it holds that
\begin{equation}\label{eq:4.52}
\E_{\Q^q}\left[\left.\xi+\int_t^Tf(s,q_s)\, \dif s\right|\F_t\right]\geq Y_t,\ \ t\in\T.
\end{equation}
In view of (A0), \eqref{eq:3.3} and \eqref{eq:3.4}, a same computation as that from \eqref{eq:4.33}-\eqref{eq:4.35} yields that for each $n\geq 1$,
\begin{equation}\label{eq:4.53}
Y_t\leq \E_{\Q^q}\left[\left.Y_{\tau_n^t}+\int_t^{\tau_n^t}f(s,q_s)\, \dif s\right|\F_t\right], \ \ t\in \T,
\end{equation}
where the stopping time $\tau_n^t$ is defined in \eqref{eq:4.34}. On the other hand, by (A2) and \eqref{eq:3.1} we have
$$
{\gamma^{-\frac{1}{\alpha-1}}}\, \E\left[\int_0^T L^q_t |q_t|^{\alpha^*}\dif t\right]= {\gamma^{-\frac{1}{\alpha-1}}}\, \E_{\Q^q}\left[\int_0^T |q_t|^{\alpha^*}\dif t\right]\leq  \E_{\Q^q}\left[\int_0^T (h_t+|f(t,q_t)|)\dif t\right]<+\infty,
$$
which together with \eqref{eq:4.13} yields that $\E\left[L^q_{T}[\ln(1+L^q_{T})\right]^{\frac{\alpha^*}{2}}]<+\infty$, and then, in view of \eqref{eq:4.51} and \eqref{eq:4.4} with $(\eps,q,\mu,\delta)=(1,2,1,\frac{2}{\alpha^*})$, we have
$$
\begin{array}{l}
\Dis\E_{\Q^q}\left[\sup_{t\in \T}|Y_t|\right]= \Dis \E\left[\left(\sup_{t\in \T}|Y_t|\right)L^q_{T}\right]\vspace{0.1cm}\\
\ \ \leq \Dis
\E\left[\exp\left\{2\left(\sup_{t\in \T}|Y_t|\right)^{\frac{2}{\alpha^*}}\right\}\right]+\E\left[L^q_{T}
\left[\ln(1+L^q_{T})\right]^{\frac{\alpha^*}{2}}\right]+\tilde C_\alpha<+\infty,
\end{array}
$$
where $\tilde C_\alpha$ is a positive constant depending only on $\alpha$. Thus, the desired assertion \eqref{eq:4.52} follows by sending $n$ to infinity in \eqref{eq:4.53} and applying Lebesgue's dominated convergence theorem.\vspace{0.2cm}

Next, we set $\tilde q_s\in \partial g(s,Z_s)$ for each $s\in \T$ and prove that $(\tilde q_t)_{t\in\T}\in \qcal(\xi,f)$ and
\begin{equation}\label{eq:4.54}
Y_t=\E_{\Q^{\tilde q}}\left[\left.\xi+\int_t^T f(s,\tilde q_s)\, \dif s\right|\F_t\right],\ \ t\in\T.
\end{equation}
Since $f(s,\tilde q_s)=Z_s\cdot \tilde q_s-g(s,Z_s)$, by (A2) and \eqref{eq:4.8} with $\mu=2^{\alpha-1}\gamma$ and $\delta=\alpha$ we have for each $s\in \T$,
$$
g(s,Z_s)\leq Z_s\cdot \tilde q_s-{\gamma^{-\frac{1}{\alpha-1}}}|\tilde q_s|^{\alpha^*}+h_s\leq 2^{\alpha-1}\gamma |Z_s|^\alpha+\frac{1}{2}{\gamma^{-\frac{1}{\alpha-1}}}|\tilde q_s|^{\alpha^*}-{\gamma^{-\frac{1}{\alpha-1}}}|\tilde q_s|^{\alpha^*}+h_s,
$$
and then, in view of the basic assumption of $1<\alpha<2<\alpha^*$,
$$
\int_0^T |\tilde q_s|^2 \dif s\leq \int_0^T (1+|\tilde q_s|^{\alpha^*})\, \dif s\leq T+ 2\gamma^{\frac{1}{\alpha-1}}\int_0^T \left[2^{\alpha-1}\gamma (1+|Z_s|^2)+h_s-g(s,Z_s)\right]\dif s<+\infty.
$$
Moreover, by letting $q_s=\tilde q_s{\bf 1}_{[0,\sigma_n]}(s)$ in \eqref{eq:4.13} of \cref{pro:4.3}, we can conclude that there exists a positive constant $C_{\alpha,T}$ depending only on $(\alpha,T)$ such that for each $n\geq 1$,
\begin{equation}\label{eq:4.55}
\E\left[L^{\tilde q}_{\sigma_n}\left[\ln(1+L^{\tilde q}_{\sigma_n})\right]^{\frac{\alpha^*}{2}}\right]\leq C_{\alpha,T}\E\left[\int_0^{\sigma_n} L^{\tilde q}_t|\tilde q_t|^{\alpha^*}\dif t\right]+C_{\alpha,T},
\end{equation}
where the stopping time $\sigma_n$ is defined in \eqref{eq:4.37}. In view of \eqref{eq:3.3} together with the fact that $f(s,\tilde q_s)=Z_s\cdot \tilde q_s-g(s,Z_s)$,  by an identical analysis to \eqref{eq:4.39} we obtain that for each $n\geq 1$,
\begin{equation}\label{eq:4.56}
Y_0=\E_{\Q^{\tilde q}_n}\left[Y_{\sigma_n}+\int_0^{\sigma_n}f(s,\tilde q_s)\, \dif s\right].
\end{equation}
On the other hand, applying assumption (A2), inequality \eqref{eq:4.4} with $$(\eps,q,\delta,\mu)=(1,\, 2,\, \frac{2}{\alpha^*},\, \frac{1}{2}C_{\alpha,T}^{-1}{\gamma^{-\frac{1}{\alpha-1}}})$$
and inequality \eqref{eq:4.55}, we obtain that there exists a $\bar C_{\alpha,\gamma,T}>0$ depending only on $(\alpha,\gamma,T)$ such that
\begin{equation}\label{eq:4.57}
\begin{array}{l}
\Dis \E_{\Q^{\tilde q}_n}\left[Y_{\sigma_n}+\int_0^{\sigma_n} f(s,\tilde q_s)\, \dif s\right]\geq \Dis  -\E\left[\left(|Y_{\sigma_n}|+\int_0^{\sigma_n}h_s \dif s\right) L^{\tilde q}_{\sigma_n}\right]+{\gamma^{-\frac{1}{\alpha-1}}}
\E\left[\left(\int_0^{\sigma_n} |\tilde q_s|^{\alpha^*}\dif s\right)L^{\tilde q}_{\sigma_n}\right]\vspace{0.1cm}\\
\ \ \geq \Dis  \Dis -\E\left[\exp\left\{2(2\gamma^{\frac{1}{\alpha-1}}C_{\alpha,T})^{\frac{2}{\alpha^*}}
\left(|Y_{\sigma_n}|+\int_0^{\sigma_n} h_s \dif s\right)^{\frac{2}{\alpha^*}}\right\}\right]-\bar C_{\alpha,\gamma,T}\vspace{0.1cm}\\
\ \ \ \ \Dis \ \ -
\frac{1}{2}\gamma^{-\frac{1}{\alpha-1}}C_{\alpha,T}^{-1}\, \E\left[ L^{\tilde q}_{\sigma_n}\left[\ln\left(1+L^{\tilde q}_{\sigma_n}\right)\right]^{\frac{\alpha^*}{2}}
\right]+{\gamma^{-\frac{1}{\alpha-1}}}\E\left[\int_0^{\sigma_n} L^{\tilde q}_s |q_s|^{\alpha^*}\dif s\right]\vspace{0.1cm}\\
\ \ \geq \Dis -\E\left[\exp\left\{2(2\gamma^{\frac{1}{\alpha-1}}C_{\alpha,T})^{\frac{2}{\alpha^*}}
\left(\sup\limits_{t\in\T}|Y_t|+\int_0^T h_s \, \dif s\right)^{\frac{2}{\alpha^*}}\right\}\right]\vspace{0.1cm}\\
\qquad \Dis+\frac{1}{2}\gamma^{-\frac{1}{\alpha-1}}\, \E\left[\int_0^{\sigma_n} L^{\tilde q}_s |q_s|^{\alpha^*}\dif s\right] -\bar C_{\alpha,\gamma,T}-\frac{1}{2}\gamma^{-\frac{1}{\alpha-1}}.
\end{array}
\end{equation}
In view of \eqref{eq:4.51} and \eqref{eq:4.55}-\eqref{eq:4.57}, by an identical analysis to \eqref{eq:4.42} we can conclude that the process $(L^{\tilde q}_t)_{t\in\T}$ is a uniformly integrable martingale, and
\begin{equation}\label{eq:4.59}
\E\left[L^{\tilde q}_T\left[\ln(1+L^{\tilde q}_T)\right]^{\frac{{\alpha^*}}{2}}\right]\leq \liminf_{n\To \infty} \E\left[L^{\tilde q}_{\sigma_n}\left[\ln(1+L^{\tilde q}_{\sigma_n})\right]^{\frac{{\alpha^*}}{2}}\right]<+\infty.
\end{equation}
To show that $(\tilde q_t)_{t\in\T}\in \qcal(\xi,f)$, we have to prove that
\begin{equation}\label{eq:4.60}
\E_{\Q^{\tilde q}}\left[|\xi|+\int_0^T(h_s+|f(s,\tilde q_s)|)\, \dif s\right]<+\infty.
\end{equation}
Indeed, by (A2), \eqref{eq:4.4} with $(\eps,q,\mu,\delta)=(1,2,1,\frac{2}{\alpha^*})$, \eqref{eq:4.46} and \eqref{eq:4.59}, we have
\begin{equation}\label{eq:4.61}
\begin{array}{l}
\Dis \E_{\Q^{\tilde q}}\left[|\xi|+\int_0^T(h_s+f^-(s,\tilde q_s))\, \dif s\right]\leq 2\E\left[\left(|\xi|+\int_0^T h_s\, \dif s\right)L^{\tilde q}_T\right]\vspace{0.1cm}\\
\ \ \leq \Dis 2\E\left[\exp\left\{2\left(|\xi|+\int_0^T h_s\, \dif s\right)^{\frac{2}{\alpha^*}}\right\}\right]+2\E\left[L^{\tilde q}_T\left[\ln(1+L^{\tilde q}_T)\right]^{\frac{\alpha^*}{2}}\right]+2\tilde C_\alpha<+\infty,
\end{array}
\end{equation}
where $\tilde C_\alpha$ is a positive constant depending only on $\alpha$. By an identical computation to \eqref{eq:4.45}, we deduce that for each $n\geq 1$,
\begin{equation}\label{eq:4.62}
Y_t=\E_{\Q^{\tilde q}}\left[\left. Y_{\tau^t_n}+\int_t^{\tau^t_n} f(s,\tilde q_s)\, \dif s\right|\F_t\right], \ \ t\in \T.
\end{equation}
Then, in view of \eqref{eq:4.62} with $t=0$, \eqref{eq:4.4} with $(\eps,q,\mu,\delta)=(1,2,1,\frac{2}{\alpha^*})$, \eqref{eq:4.51} and \eqref{eq:4.59},
$$
\sup_{n\geq 1}\E_{\Q^{\tilde q}}\left[\int_0^{\tau^0_n} f^+(s,\tilde q_s)\, \dif s\right]\leq \E_{\Q^{\tilde q}}\left[\int_0^T f^-(s,\tilde q_s)\, \dif s\right]+ |Y_0|+\E_{\Q^{\tilde q}}\left[\sup_{t\in\T}|Y_t|\right]<+\infty,
$$
which together with Fatou's lemma and \eqref{eq:4.61} yields \eqref{eq:4.60}. Sending $n\To \infty$ in \eqref{eq:4.62} and applying Lebesgue's dominated convergence theorem, we get \eqref{eq:4.54}. As proving \cref{thm:3.1} (i),  we complete the proof.

\subsection{Proof of Assertion (iii) of \cref{thm:3.1}\vspace{0.2cm}}

Assume that the core function $f$ satisfies (A0) and (A3), and that there exists a positive constant $\bar\mu>0$ such that
\begin{equation}\label{eq:4.63}
|\xi|+\int_0^T h_t\, {\rm d}t \in L\exp[\bar\mu (\ln L)^{1+2\lambda}].\vspace{0.1cm}
\end{equation}
First of all, according to assumptions (A0) and (A3) together with \eqref{eq:4.27}, \eqref{eq:4.63} and the fact that $L\exp[\bar\mu (\ln L)^{1+2\lambda}]\subset L^2$, we can deduce that $(\bar q_t)_{t\in\T}\in \qcal(\xi,f)$, and then the space $\qcal(\xi,f)$ is\vspace{0.2cm} nonempty.

In the sequel, by (A3), \eqref{eq:3.4} and \eqref{eq:4.4} with $(\eps,q,\mu,\delta)=(c,2, \gamma,\frac{1}{\lambda})$ we can deduce that there exists a constant $C_{c,\gamma,\lambda}>0$ depending only on $(c,\gamma,\lambda)$ such that $\as$, for each $z\in \R^d$,
$$
\begin{array}{lll}
g(\omega,t,z)&=&\Dis \sup_{q\in \R^d} \left\{z\cdot q-f(\omega,t,q)\right\}\leq \sup_{q\in \R^d} \left\{z\cdot q-c\exp\left(2{\gamma^{-\frac{1}{\lambda}}}
|q|^{\frac{1}{\lambda}}\right)+h_t(\omega)\right\}\\
&\leq& \Dis \gamma |z|\left(\ln (1+|z|)\right)^\lambda +h_t(\omega)+C_{c,\gamma,\lambda},
\end{array}
$$
which together with \eqref{eq:3.5} and \eqref{eq:4.4} with $(\eps,q,\mu,\delta)=(1,2, \gamma,\frac{1}{\lambda})$ yields that the generator $g$ defined in \eqref{eq:3.4} satisfies assumptions (H0) and (H3) with
$\bar h_t:=h_t+\exp(2{\gamma^{-\frac{1}{\lambda}}}|k|^{\frac{1}{\lambda}})
+C_{c,\gamma,\lambda}$. It then follows from (iii) of \cref{pro:2.2} and \eqref{eq:4.63} that BSDE \eqref{eq:3.3} admits a unique adapted solution $(Y_t,Z_t)_{t\in\T}$ such that for some positive constant $\tilde\mu<\bar\mu$,
\begin{equation}\label{eq:4.68}
\sup_{t\in \T}|Y_t|+\int_0^T h_t\, {\rm d}t\in L\exp[\tilde\mu(\ln L)^{1+2\lambda}].
\end{equation}

To show the dual representation, we need to further verify that $U_t(\xi)=Y_t$ for each $t\in \T$, where $U_t(\xi)$ is defined in \eqref{eq:3.2}. We first prove that for each $(q_t)_{t\in\T}\in \qcal(\xi,f)$, it holds that
\begin{equation}\label{eq:4.69}
\E_{\Q^q}\left[\left.\xi+\int_t^Tf(s,q_s)\dif s\right|\F_t\right]\geq Y_t,\ \ t\in\T.
\end{equation}
In view of (A0), \eqref{eq:3.3} and \eqref{eq:3.4}, a same computation as that from \eqref{eq:4.33}-\eqref{eq:4.35} yields that for each $n\geq 1$,
\begin{equation}\label{eq:4.70}
Y_t\leq \E_{\Q^q}\left[\left.Y_{\tau_n^t}+\int_t^{\tau_n^t}f(s,q_s)\dif s\right|\F_t\right], \ \ t\in \T,
\end{equation}
where the stopping time $\tau_n^t$ is defined in \eqref{eq:4.34}. On the other hand, by (A3) and \eqref{eq:3.1}, we have
$$
c\E\left[\int_0^T L^q_t \exp\left(2{\gamma^{-\frac{1}{\lambda}}}|q_t|^{\frac{1}{\lambda}}\right)\dif t\right]= \Dis c\E_{\Q^q}\left[\int_0^T \exp\left(2{\gamma^{-\frac{1}{\lambda}}}|q_t|^{\frac{1}{\lambda}}\right)\dif t\right]\leq \E_{\Q^q}\left[\int_0^T (h_t+|f(t,q_t)|)\dif t\right]<+\infty,
$$
which and \eqref{eq:4.14} with $\mu=\tilde\mu^{-\frac{1}{1+2\lambda}}$ yield that
$\E[L^q_T\exp\{\tilde\mu^{-\frac{1}{1+2\lambda}}[\ln(1+L^q_T)]^{\frac{1}{1+2\lambda}}\}]<+\infty$, and then, in view of \eqref{eq:4.68} and \eqref{eq:4.3} with $\mu=\tilde\mu$ and $\delta=\frac{1}{1+2\lambda}$, we have
$$
\begin{array}{l}
\E_{\Q^q}\left[\sup_{t\in \T}|Y_t|\right]=\Dis \E\left[\left(\sup_{t\in \T}|Y_t|\right)L^q_{T}\right]\leq  \Dis \E\left[L^q_T\exp\left\{\tilde\mu^{-\frac{1}{1+2\lambda}}
\left[\ln\left(1+L^q_T\right)\right]^{\frac{1}{1+2\lambda}}
\right\}\right]\vspace{0.1cm}\\
\hspace{4cm} \Dis +\E\left[\left(\sup_{t\in \T}|Y_t|\right)\exp\left\{\tilde\mu\left[\ln\left(1+\sup_{t\in \T}|Y_t|\right)\right]^{1+2\lambda}\right\}\right]<+\infty.
\end{array}
$$
Thus, the desired assertion \eqref{eq:4.69} follows by sending $n$ to infinity in \eqref{eq:4.70} and applying Lebesgue's dominated convergence theorem.\vspace{0.2cm}

Next, we set $\tilde q_s\in \partial g(s,Z_s)$ for each $s\in \T$ and prove that $(\tilde q_t)_{t\in\T}\in \qcal(\xi,f)$ and
\begin{equation}\label{eq:4.71}
Y_t=\E_{\Q^{\tilde q}}\left[\left.\xi+\int_t^T f(s,\tilde q_s)\dif s\right|\F_t\right],\ \ t\in\T.
\end{equation}
Since $f(s,\tilde q_s)=Z_s\cdot \tilde q_s-g(s,Z_s)$, by (A3) and \eqref{eq:4.4} with $(\eps,q,\mu,\delta)=(\frac{c}{2},2, \gamma,\frac{1}{\lambda})$ we deduce that there exists a constant $\bar C_{c,\gamma,\lambda}>0$ depending only on $(c,\gamma,\lambda)$ such that for each $s\in \T$,
$$
g(s,Z_s)\leq Z_s\cdot \tilde q_s-c\exp\left(2{\gamma^{-\frac{1}{\lambda}}}
|\tilde q_s|^{\frac{1}{\lambda}}\right)
+h_s\leq \gamma |Z_s|\left(\ln (1+|Z_s|)\right)^\lambda-\frac{c}{2}\exp
\left(2{\gamma^{-\frac{1}{\lambda}}}
|\tilde q_s|^{\frac{1}{\lambda}}\right)+h_s+\bar C_{c,\gamma,\lambda},
$$
and then, in view of the fact that $x^2\leq \frac{c}{2}\exp(2{\gamma^{-\frac{1}{\lambda}}}
x^{\frac{1}{\lambda}})+\tilde C_{c,\gamma,\lambda}$ for each $x\in\R_+$ and some constant $\tilde C_{c,\gamma,\lambda}>0$ depending only on $(c,\gamma,\lambda)$, and the fact that $x(\ln(1+x))^\lambda\leq K_\lambda+x^2$ for each $x\in\R_+$ and some constant $K_\lambda>0$ depending only on $\lambda$,
$$
\begin{array}{lll}
\Dis \int_0^T |\tilde q_s|^2 \dif s &\leq & \Dis \frac{c}{2}\int_0^T\exp
\left(2{\gamma^{-\frac{1}{\lambda}}}
|\tilde q_s|^{\frac{1}{\lambda}}\right)\dif s+T\tilde C_{c,\gamma,\lambda}\vspace{0.1cm}\\
&\leq & \Dis \int_0^T \left[\gamma (K_\lambda+|Z_s|^2)+h_s-g(s,Z_s)\right]\dif s+T\tilde C_{c,\gamma,\lambda} <+\infty.
\end{array}
$$
Moreover, letting $q_s=\tilde q_s{\bf 1}_{[0,\sigma_n]}(s)$ in \eqref{eq:4.14} of \cref{pro:4.3}, we conclude that for each $\eps>0$, there exists a positive constant $C_{\eps,\bar\mu,\gamma,\lambda,T}$ depending only on $(\eps,\bar\mu,\gamma,\lambda,T)$ such that for each $n\geq 1$,
\begin{equation}\label{eq:4.72}
\E\left[L^{\tilde q}_{\sigma_n}\exp\left(
{\bar\mu^{-\frac{1}{1+2\lambda}}}\left[\ln(1+L^{\tilde q}_{\sigma_n})\right]^{\frac{1}{1+2\lambda}}\right)\right]
\leq \Dis \eps\, \E\left[\int_0^{\sigma_n} L_s^{\tilde q} \exp\left(2{\gamma^{-\frac{1}{\lambda}}}
|{\tilde q}_s|^{\frac{1}{\lambda}}\right)\dif s\right]+ C_{\eps,\bar\mu,\gamma,\lambda,T},
\end{equation}
where the stopping time $\sigma_n$ is defined in \eqref{eq:4.37}. In view of \eqref{eq:3.3} together with the fact that $f(s,\tilde q_s)=Z_s\cdot \tilde q_s-g(s,Z_s)$,  by an identical analysis to \eqref{eq:4.39} we obtain that for each $n\geq 1$,
\begin{equation}\label{eq:4.73}
Y_0=\E_{\Q^{\tilde q}_n}\left[Y_{\sigma_n}+\int_0^{\sigma_n}f(s,\tilde q_s)\dif s\right].
\end{equation}
On the other hand, by applying assumption (A3), inequality \eqref{eq:4.3} with $\mu=\bar\mu$ and $\delta=\frac{1}{1+2\lambda}$, and inequality \eqref{eq:4.72} with $\eps=\frac{c}{2}$, we can conclude that
\begin{equation}\label{eq:4.74}
\begin{array}{l}
\Dis \E_{\Q^{\tilde q}_n}\left[Y_{\sigma_n}+\int_0^{\sigma_n} f(s,\tilde q_s)\dif s\right]\vspace{0.1cm}\\
\ \ \geq \Dis  -\E\left[\left(|Y_{\sigma_n}|+\int_0^{\sigma_n}h_s \dif s\right) L^{\tilde q}_{\sigma_n}\right]+c\, \E\left[\left(\int_0^{\sigma_n} \exp\left(2{\gamma^{-\frac{1}{\lambda}}}
|\tilde q_s|^{\frac{1}{\lambda}}\right)\dif s\right)L^{\tilde q}_{\sigma_n}\right]\vspace{0.1cm}\\
\ \ \geq \Dis  \Dis -\E\left[\left(|Y_{\sigma_n}|+\int_0^{\sigma_n} h_s \dif s\right) \exp\left\{\bar\mu\left[\ln \left(1+|Y_{\sigma_n}|+\int_0^{\sigma_n} h_s \dif s\right)\right]^{1+2\lambda}\right\}\right]\vspace{0.1cm}
\end{array}
\end{equation}
$$
\begin{array}{l}
\ \ \ \ \ \ \ \ \ \ \ \ \Dis \ \ -\E\left[L^{\tilde q}_{\sigma_n}\exp\left\{ \left[{\bar\mu^{-\frac{1}{1+2\lambda}}}
\ln\left(1+L^{\tilde q}_{\sigma_n}\right)
\right]^{\frac{1}{1+2\lambda}}\right\}
\right]+c\E\left[\int_0^{\sigma_n} L^{\tilde q}_s \exp\left(2{\gamma^{-\frac{1}{\lambda}}}
|\tilde q_s|^{\frac{1}{\lambda}}\right)\dif s\right]\vspace{0.1cm}\\
\ \ \ \ \ \ \ \ \ \ \geq \Dis -\E\left[\left(\sup\limits_{t\in\T}|Y_t|+\int_0^T h_s \dif s\right) \exp\left\{\bar\mu\left[\ln \left(1+\sup\limits_{t\in\T}|Y_t|+\int_0^T h_s \dif s\right)\right]^{1+2\lambda}\right\}\right]\vspace{0.1cm}\\
\ \ \ \ \ \ \ \ \ \ \ \ \ \ \Dis +\frac{c}{2}\E\left[\int_0^{\sigma_n} L^{\tilde q}_s \exp\left(2{\gamma^{-\frac{1}{\lambda}}}
|\tilde q_s|^{\frac{1}{\lambda}}\right)\dif s\right]- C_{\frac{c}{2},\bar\mu,\gamma,\lambda,T}.\vspace{0.1cm}
\end{array}
$$
In view of \eqref{eq:4.68} and \eqref{eq:4.72}-\eqref{eq:4.74}, by an identical analysis to \eqref{eq:4.42} we can conclude that the process $(L^{\tilde q}_t)_{t\in\T}$ is a uniformly integrable martingale, and
\begin{equation}\label{eq:4.76}
\E\left[L^{\tilde q}_T\exp\left({\bar\mu^{-\frac{1}{1+2\lambda}}}\left[\ln(1+L^{\tilde q}_T)\right]^{\frac{1}{1+2\lambda}}\right)\right]<+\infty.
\end{equation}
To show that $(\tilde q_t)_{t\in\T}\in \qcal(\xi,f)$, we have to prove that
\begin{equation}\label{eq:4.77}
\E_{\Q^{\tilde q}}\left[|\xi|+\int_0^T(h_s+|f(s,\tilde q_s)|)\dif s\right]<+\infty.
\end{equation}
Indeed, by (A3), \eqref{eq:4.3} with $\mu=\bar\mu$ and $\delta=\frac{1}{1+2\lambda}$, \eqref{eq:4.63} and \eqref{eq:4.76},  we have
\begin{equation}\label{eq:4.78}
\begin{array}{l}
\Dis\E_{\Q^{\tilde q}}\left[|\xi|+\int_0^T(h_s+f^-(s,\tilde q_s))\dif s\right]\leq 2\E\left[\left(|\xi|+\int_0^T h_s\dif s\right)L^{\tilde q}_T\right]\vspace{0.1cm}\\
\ \ \leq \Dis \E\left[L^{\tilde q}_T\exp\left({\bar\mu^{-\frac{1}{1+2\lambda}}}\left[\ln(1+L^{\tilde q}_T)\right]^{\frac{1}{1+2\lambda}}\right)
\right]\vspace{0.1cm}\\
\ \ \ \ \ \  \Dis +\E\left[\left(|\xi|+\int_0^{\sigma_n} h_s \dif s\right) \exp\left\{\bar\mu\left[\ln \left(1+|\xi|+\int_0^{\sigma_n} h_s \dif s\right)\right]^{1+2\lambda}\right\}\right]<+\infty.
\end{array}
\end{equation}
By an identical computation to \eqref{eq:4.45}, we deduce that for each $n\geq 1$,
\begin{equation}\label{eq:4.79}
Y_t=\E_{\Q^{\tilde q}}\left[\left. Y_{\tau^t_n}+\int_t^{\tau^t_n} f(s,\tilde q_s)\dif s\right|\F_t\right], \ \ t\in \T.
\end{equation}
Then, in view of \eqref{eq:4.79} with $t=0$, \eqref{eq:4.3} with $\mu=\tilde\mu$ and $\delta=\frac{1}{1+2\lambda}$, \eqref{eq:4.68} and \eqref{eq:4.76},
$$
\sup_{n\geq 1}\E_{\Q^{\tilde q}}\left[\int_0^{\tau^0_n} f^+(s,\tilde q_s)\dif s\right]\leq \E_{\Q^{\tilde q}}\left[\int_0^T f^-(s,\tilde q_s)\dif s\right]+ |Y_0|+\E_{\Q^{\tilde q}}\left[\sup_{t\in\T}|Y_t|\right]<+\infty,
$$
which together with Fatou's lemma and \eqref{eq:4.78} yields \eqref{eq:4.77}. Sending $n$ to infinity in \eqref{eq:4.79} and applying Lebesgue's dominated convergence theorem, we get \eqref{eq:4.71}. The rest proof runs as (i) of \cref{thm:3.1}.

\subsection{Proof of Assertion (iv) of \cref{thm:3.1}\vspace{0.2cm}}

Assume that the core function $f$ satisfies (A0) with $k\leq \gamma$ and (A4), and that
\begin{equation}\label{eq:4.80}
|\xi|+\int_0^T h_t{\rm d}t \in \bigcap_{\bar\mu>0}L\exp(\bar\mu (\ln L)^{\frac{1}{2}}).
\end{equation}
Consider a constant $\mu>\mu_0:=\gamma \sqrt{2T}$. First of all, in view of (A4), we have from \eqref{eq:3.2} that
\begin{equation}\label{eq:4.81}
U_t(\xi):=\essinf\limits_{q\in \bar\qcal(\xi,f)}\E_{\Q^q}\left[\left. \xi+\int_t^T f(s,q_s){\rm d}s\right|\F_t\right],\ \ t\in \T,\vspace{-0.1cm}
\end{equation}
where
$$
\begin{array}{l}
\bar\qcal(\xi,f):=\bigg\{
 (\F_t)\text{-progressively measurable~} \R^d \text{-valued process} \, (q_t)_{t\in\T}:\\
\Dis \hspace{2.2cm}\as,\ |q_t|\leq \gamma\ \ {\rm and}\ \ \E_{\Q^q}\left[|\xi|+\int_0^T (h_s+|f(s,q_s)|)\dif s\right]<+\infty\vspace{0.1cm}\\
\Dis \hspace{2.2cm}\text{with}\ {\dif \Q^q}:=\exp\Big(\int_{0}^{T}q_s\cdot \dif B_s-\frac{1}{2}\int_{0}^{T}| q_s|^2\dif s\Big)\, {\dif \p} \bigg\}.
\end{array}
$$
Moreover, we see  from \cite[Lemma 2.6]{HuTang2018ECP} that for any $(\F_t)$-progressively measurable $\R^d$-valued process $(q_t)_{t\in\T}$ such that $|q_t|\leq \gamma,$ $\as$,  and for each $\bar\mu>\mu_0$, we have
\begin{equation}\label{eq:4.82}
\E\left[\left. \exp\left(\frac{1}{\bar\mu^2}\left|\int_t^T q_s\cdot \dif B_s\right|^2 \right) \right|\F_t\right]\leq \frac{1}{\sqrt{1-\frac{2\gamma^2}{\bar\mu^2}(T-t)}}<+\infty,\ \ t\in\T.
\end{equation}
Next, we show that $(\bar q_t)_{t\in\T}\in \bar\qcal(\xi,f)$, and then the space $\bar\qcal(\xi,f)$ is nonempty. Indeed, from (A0) with $k\leq \gamma$ and (A4) we can verify that $\as$, $|\bar q_t|\leq \gamma$ and $|f(t,\bar q_t)|\leq h_t$, and then according to \eqref{eq:4.7} with some constant $q:=p\in (1,\frac{\mu}{\mu_0})$ and \eqref{eq:4.82} with $\bar\mu=\frac{\mu}{\sqrt{p}}>\mu_0$ and $t=0$ together with \eqref{eq:4.80}, we deduce that there exists a constant $C_{\mu,p}>0$ depending only on $(\mu,p)$ such that
\begin{equation}\label{eq:4.83}
\begin{array}{lll}
\Dis \E_{\Q^{\bar q}}\left[|\xi|+\int_0^T (h_s+|f(s,\bar q_s)|)\dif s\right]\leq \Dis 2\E\left[\left(|\xi|+\int_0^T h_s\dif s\right)L_T^{\bar q}\right]\vspace{0.1cm}\\
\ \ \Dis \leq 2\E\left[\left(|\xi|+\int_0^T h_s\dif s\right)\exp\left(\int_{0}^{T}\bar q_s\cdot \dif B_s\right)\right]\vspace{0.1cm}\\
\ \ \leq \Dis 2C_{\mu,p}\E\left[\exp\left(\frac{p}{\mu^2}\left|\int_0^T \bar q_s\cdot \dif B_s\right|^2 \right)\right]\vspace{0.1cm}\\
\ \ \ \ \ \Dis +2\E\left[\left(|\xi|+\int_0^T h_s\dif s\right)\exp\left\{\mu \sqrt{\ln\left(1+|\xi|+\int_0^T h_s\dif s\right)}\right\}\right] <+\infty.
\end{array}
\end{equation}
Hence, $(\bar q_t)_{t\in\T}\in \bar\qcal(\xi,f)$.\vspace{0.2cm}

In the sequel, by \eqref{eq:3.4} and (A4) we can deduce that $\as$, for each $z\in \R^d$,
$$
g(\omega,t,z)=\sup_{q\in \R^d} \left\{z\cdot q-f(\omega,t,q)\right\}\leq \sup_{q\in \R^d , |q|\leq \gamma} \left\{z\cdot q+h_t(\omega)\right\}=\gamma |z|+h_t(\omega),
$$
which together with \eqref{eq:3.5} and the condition of $k\leq \gamma$ yields that the generator $g$ defined in \eqref{eq:3.4} satisfies assumptions (H0) and (H4) with $\bar h_t=h_t$. It then follows from (iv) of \cref{pro:2.2} and \eqref{eq:4.80} that BSDE \eqref{eq:3.3} admits a unique adapted solution $(Y_t,Z_t)_{t\in\T}$ such that for each $\bar\mu>0$,
\begin{equation}\label{eq:4.85}
{\rm the\ process}\ (|Y_t|\exp(\bar\mu\sqrt{\ln(1+|Y_t|)}))_{t\in\T}\ {\rm belongs\ to\ class\ (D)}.
\end{equation}

To show the dual representation, we  need to further verify that $U_t(\xi)=Y_t$ for each $t\in \T$, where $U_t(\xi)$ is defined in \eqref{eq:4.81}. We first prove that for each $(q_t)_{t\in\T}\in \bar\qcal(\xi,f)$, it holds that
\begin{equation}\label{eq:4.86}
\E_{\Q^q}\left[\left.\xi+\int_t^Tf(s,q_s)\dif s\right|\F_t\right]\geq Y_t,\ \ t\in\T.
\end{equation}
In view of (A0), \eqref{eq:3.3} and \eqref{eq:3.4}, a same computation as that from \eqref{eq:4.33}-\eqref{eq:4.35} yields that for each $n\geq 1$,
\begin{equation}\label{eq:4.87}
\begin{array}{lll}
Y_t&\leq & \Dis \E_{\Q^q}\left[\left.Y_{\tau_n^t}+\int_t^{\tau_n^t}f(s,q_s)\dif s\right|\F_t\right]\vspace{0.2cm}\\
&=& \Dis \E\left[\left.Y_{\tau_n^t} \frac{L^q_{\tau_n^t}}{L^q_t}
\right|\F_t\right]+\E_{\Q^q}\left[\left.\int_t^{\tau_n^t}f(s,q_s)\dif s\right|\F_t\right], \ \ t\in \T,
\end{array}
\end{equation}
where the stopping time $\tau_n^t$ is defined in \eqref{eq:4.34}. According to \eqref{eq:4.7} with some constant $q:=p\in (1, \frac{\mu}{\mu_0})$ and \eqref{eq:4.82} with $\bar\mu=\frac{\mu}{p}>\mu_0$, we can deduce that there exists a constant $C_{\mu,p}>0$ depending only on $(\mu,p)$ such that for each $n\geq 1$ and $t\in\T$,
$$
\begin{array}{lll}
\Dis Y_{\tau_n^t} \frac{L^q_{\tau_n^t}}{L^q_t}&\leq & \Dis |Y_{\tau_n^t}| \exp\left(\int_t^{\tau_n^t} q_s\cdot \dif B_s\right)\vspace{0.1cm}\\
&\leq & \Dis C_{\mu,p}\exp\left(\frac{p}{\mu^2}\left|\int_t^{\tau_n^t} q_s\cdot \dif B_s\right|^2 \right)+|Y_{\tau_n^t}|\exp\left\{\mu \sqrt{\ln\left(1+|Y_{\tau_n^t}|\right)}\right\}
\end{array}
$$
and
$$
\begin{array}{lll}
\Dis \E\left[\left|\exp\left(\frac{p}{\mu^2}\left|\int_t^{\tau_n^t} q_s\cdot \dif B_s\right|^2\right)\right|^p \right]&=& \Dis \E\left[\exp\left(\frac{p^2}{\mu^2}\left|\int_t^{\tau_n^t} q_s\cdot \dif B_s\right|^2 \right)\right]\vspace{0.1cm}\\
&\leq& \Dis \frac{1}{\sqrt{1-\frac{2p^2\gamma^2}{\mu^2}(T-t)}}<+\infty,
\end{array}
$$
which together with \eqref{eq:4.85} yield that for each $t\in\T$, the sequence of random variables $Y_{\tau_n^t} \frac{L^q_{\tau_n^t}}{L^q_t}$ are uniformly integrable. On the other hand, since $(q_t)_{t\in\T}\in \bar\qcal(\xi,f)$, we have
$$
\E_{\Q^q}\left[\int_0^T |f(s,q_s)|\dif s\right]<+\infty.
$$
Thus, the desired assertion \eqref{eq:4.86} follows immediately by sending $n$ to infinity in \eqref{eq:4.87}.\vspace{0.2cm}

Next, we set $\tilde q_s\in \partial g(s,Z_s)$ for each $s\in \T$ and prove that $(\tilde q_t)_{t\in\T}\in \bar\qcal(\xi,f)$ and
\begin{equation}\label{eq:4.88}
Y_t=\E_{\Q^{\tilde q}}\left[\left.\xi+\int_t^T f(s,\tilde q_s)\dif s\right|\F_t\right],\ \ t\in\T.
\end{equation}
Since $f(s,\tilde q_s)=Z_s\cdot \tilde q_s-g(s,Z_s)$, by (A4) we deduce that $\as$, $|\tilde q_t|\leq \gamma$, and by \eqref{eq:4.82} we obtain that for some constant $p\in (1,\frac{\mu}{\mu_0})$,
\begin{equation}\label{eq:4.89}
\Dis\E\left[\exp\left(\frac{p}{\mu^2}\left|\int_0^T  \tilde q_s\cdot \dif B_s\right|^2 \right)\right]\leq \E\left[\exp\left(\frac{p^2}{\mu^2}\left|\int_0^T  \tilde q_s\cdot \dif B_s\right|^2 \right)\right]\leq \frac{1}{\sqrt{1-\frac{2p^2\gamma^2}{\mu^2}(T-t)}}<+\infty.
\end{equation}
To show that $(\tilde q_t)_{t\in\T}\in \bar\qcal(\xi,f)$, we have to prove that
\begin{equation}\label{eq:4.90}
\E_{\Q^{\tilde q}}\left[|\xi|+\int_0^T(h_s+|f(s,\tilde q_s)|)\dif s\right]<+\infty.
\end{equation}
Indeed, by (A4) and \eqref{eq:4.7} with $q:=p$ together with \eqref{eq:4.89} and \eqref{eq:4.80} we deduce that there exists a constant $\bar C_{\mu,p}>0$ depending only on $(\mu,p)$ such that
\begin{equation}\label{eq:4.91}
\begin{array}{l}
\Dis \E_{\Q^{\tilde q}}\left[|\xi|+\int_0^T(h_s+f^-(s,\tilde q_s))\dif s\right]\leq \Dis 2\E\left[\left(|\xi|+\int_0^T h_s\dif s\right)L^{\tilde q}_T\right]\vspace{0.1cm}\\
\ \ \Dis \leq 2\E\left[\left(|\xi|+\int_0^T h_s\dif s\right)\exp\left(\int_0^T \tilde q_s\cdot \dif B_s\right)\right]\vspace{0.1cm}\\
\ \ \leq \Dis 2\bar C_{\mu,p}\E\left[\exp\left(\frac{p}{\mu^2}\left|\int_0^T \tilde q_s\cdot \dif B_s\right|^2 \right)\right]\vspace{0.1cm}\\
\ \ \ \ \ \ \Dis +2\E\left[\left(|\xi|+\int_0^T h_s\dif s\right)\exp\left\{\mu \sqrt{\ln\left(1+|\xi|+\int_0^T h_s\dif s\right)}\right\}\right] <+\infty.
\end{array}
\end{equation}
By an identical computation to \eqref{eq:4.45}, we obtain that for each $n\geq 1$,
\begin{equation}\label{eq:4.92}
\begin{array}{lll}
Y_t&=& \Dis \E_{\Q^{\tilde q}}\left[\left.Y_{\tau_n^t}+\int_t^{\tau_n^t}f(s,{\tilde q}_s)\dif s\right|\F_t\right]\vspace{0.1cm}\\
&=& \Dis \E\left[\left.Y_{\tau_n^t} \frac{L^{\tilde q}_{\tau_n^t}}{L^{\tilde q}_t}
\right|\F_t\right]+\E_{\Q^{\tilde q}}\left[\left.\int_t^{\tau_n^t}f(s,{\tilde q}_s)\dif s\right|\F_t\right], \ \ t\in \T.
\end{array}
\end{equation}
On the other hand, by applying \eqref{eq:4.7}, \eqref{eq:4.85} and \eqref{eq:4.89} we deduce that the sequence of random variables $|Y_{\tau_n^t}| L^{\tilde q}_{\tau_n^t}/L^{\tilde q}_t$ is uniformly integrable for each $t\in\T$. Then, in view of \eqref{eq:4.92} with $t=0$,
$$
\sup_{n\geq 1}\E_{\Q^{\tilde q}}\left[\int_0^{\tau^0_n} f^+(s,\tilde q_s)\dif s\right]\leq \E_{\Q^{\tilde q}}\left[\int_0^T f^-(s,\tilde q_s)\dif s\right]+ |Y_0|+\sup_{n\geq 1}\E\left[|Y_{\tau_n^0}| L^{\tilde q}_{\tau_n^0}\right]<+\infty,
$$
which together with Fatou's lemma and \eqref{eq:4.91} yields \eqref{eq:4.90}. By letting $n\To \infty$ in \eqref{eq:4.92},  we get \eqref{eq:4.88}. The rest proof runs as (i) of \cref{thm:3.1}.




\setlength{\bibsep}{4pt}


\end{document}